\documentclass[microtype]{gtpart}
\usepackage{graphicx}
\usepackage[mathscr]{eucal}
\usepackage{amssymb}
\usepackage{xcolor}
\definecolor{darkred}{RGB}{200, 60, 0}
\definecolor{mildblue}{RGB}{0, 100, 250}
\definecolor{Cerulean}{rgb}{0.0, 0.48, 0.65}
\usepackage{enumerate, cite}
\usepackage[margin=1.25in]{geometry}
\usepackage{hyperref}
\hypersetup{
		colorlinks=true,
		linkcolor=darkred,
		urlcolor=darkred,
		citecolor=mildblue,      
		urlcolor=darkred,
}
\usepackage[nameinlink]{cleveref}

\newcommand{\bn}{\mathbb N}

\newcommand{\vp}{\varphi}

\newcommand{\ssm}{\smallsetminus}

\newcommand{\autg}{\Aut(\mathcal S(\Gamma))}

\DeclareMathOperator{\mcg}{MCG}
\DeclareMathOperator{\pmcg}{PMCG}

\DeclareMathOperator{\Homeo}{Homeo}

\DeclareMathOperator{\Aut}{Aut}

\DeclareMathOperator{\supp}{supp}

\DeclareMathOperator{\HH}H
\DeclareMathOperator{\F}F

\renewcommand{\co}{\colon\thinspace}

\usepackage[normalem]{ulem}

\newtheorem{theorem}{Theorem}[section]
\newtheorem{theorem*}{Theorem}
\newtheorem{proposition}[theorem]{Proposition}
\newtheorem{lemma}[theorem]{Lemma}
\newtheorem{corollary}[theorem]{Corollary}
\newtheorem{corollary*}[theorem*]{Corollary}

\theoremstyle{definition}
\newtheorem{construction}{Construction}

\newtheorem*{Ex*}{Examples}
\newtheorem{remark}[theorem]{Remark}

\numberwithin{equation}{section}

\title{Mapping class groups have a unique Polish group structure}

\author{Tyrone Ghaswala}
\address{Center for Education in Mathematics and Computing, University of Waterloo, Waterloo, ON, Canada N2L3G1}\email{tghaswala@uwaterloo.ca}

\author{Sumun Iyer}
\address{Department of Mathematics \ Carnegie Mellon University, \ Pittsburgh, PA 15217 }\email{sumuni@andrew.cmu.edu}

\author{Robert Alonzo Lyman}
\address{Department of Mathematics and Computer Science, \ Rutgers University \ Newark, NJ 07102}
\email{robbie.lyman@rutgers.edu}

\author{Nicholas G. Vlamis}
\address{Department of Mathematics, \ CUNY Graduate Center \ New York, NY 10016, and \newline Department of Mathematics \ CUNY Queens College \ Flushing, NY 11367}
\email{nvlamis@gc.cuny.edu}

\begin{document}

\begin{abstract}
	We prove that mapping class groups of surfaces and of locally finite connected graphs support a unique Polish group structure.
\end{abstract}

\maketitle

\vspace{-20pt}

\section{Introduction}

A topological space is \emph{Polish} if it is separable and completely metrizable.
Let \(G\) be a group.
We say that \(G\) has a \emph{unique Polish group topology} if there is a unique Polish topology on \(G\) making \(G\) a topological group.
Having a unique Polish group topology can be thought of as a rigidity property for the group, as it means that the algebraic structure of the group determines its possible group topologies in a strong way.

The prototypical example of a group without a unique Polish group topology is the group of real numbers \(\R\) under addition.
With the usual topology of the real line, \(\R\) is a Polish group.
However, \(\R\) is isomorphic---as a group---to \(\R^n\) for any \(n\in \mathbb N\), because both \(\R\) and \(\R^n\) are \(\Q\)-vector spaces of dimension \(2^{\aleph_0}\).
Endowing \(\R\) with the topology inherited via a group isomorphism with each \(\R^n\) produces infinitely many distinct Polish group topologies on \(\R\).
One can even endow \(\R\) with a totally disconnected Polish group topology via an isomorphism with the \(p\)-adic rationals \(\Q_p\) with the \(p\)-adic topology, since \(\Q_p\) is also a \(2^{\aleph_0}\)-dimensional vector space over \(\Q\).

For this paper, a \emph{surface} is a connected 2-manifold without boundary.
Given a surface \( S \), the \emph{mapping class group} of \( S \) is the group of isotopy classes of homeomorphisms \( S \to S \). 
In other words, \( \mcg(S) = \Homeo(S) / \Homeo_0(S) \), where \( \Homeo(S) \) is the full group of homeomorphisms \( S \to S \) and \( \Homeo_0(S) \) is the subgroup consisting of homeomorphisms isotopic to the identity. 
Equipped with the compact-open topology, \( \Homeo(S) \) is a Polish group and \( \Homeo_0(S) \) is closed, implying that the corresponding quotient topology on \( \mcg(S) \) is Polish.
Our first theorem says that this is the unique Polish group structure on \( \mcg(S) \). 

\begin{theorem}\label{thm_1_intro}
    Let \(S\) be a surface. The group \(\textrm{MCG}(S)\) has a unique Polish group topology.
\end{theorem}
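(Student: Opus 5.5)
Uniqueness of Polish group topologies is usually reduced to two facts about Polish groups. First, any continuous bijective homomorphism between Polish groups is a homeomorphism; this is the open mapping theorem, or Souslin's theorem. Second, if a Polish group \(G\) has a topology \(\tau\) that is \emph{minimal} among Hausdorff group topologies, or at least below every Polish group topology, then uniqueness follows. The cleanest route is the criterion of Zariski/Markov type. Suppose there is a countable family of subsets of \(G\) that is algebraically definable, such as centralizers, solution sets of equations \(\{g : w(g,h_1,\dots,h_k)=1\}\), or cosets of such sets, and suppose this family generates the standard topology as a subbasis. These sets are closed in \emph{every} Hausdorff group topology. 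Hence every Hausdorff group topology refines the standard one, the identity map from any Polish topology to the standard topology is continuous, and by the open mapping theorem the two topologies coincide. So the plan is to show that the quotient topology on \(\mcg(S)\) is generated by sets that are closed in any Hausdorff group topology.

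For the standard topology, a neighborhood basis of the identity consists of the pointwise stabilizers \(U_F\) of finite collections \(F\) of (isotopy classes of) simple closed curves. The topology is the permutation topology from the action on the curve graph, and it is Polish and non-archimedean. The key step is to express each \(U_F\), up to finite index or finite intersection, through centralizers. A mapping class fixes the isotopy class of a simple closed curve \(c\) exactly when it commutes with the Dehn twist \(T_c\), since \(fT_cf^{-1}=T_{f(c)}\) and \(T_a=T_b\) iff \(a=b\). So \(\mathrm{Stab}(c)\) is the centralizer \(C(T_c)\) up to the orientation issue: orientation-reversing \(f\) sends \(T_c\) to \(T_{f(c)}^{-1}\). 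This is handled by using \(\{f: fT_cf^{-1}\in\{T_c,T_c^{-1}\}\}\), which is a finite union of closed sets and therefore closed. Centralizers are closed in any Hausdorff group topology. Hence \(U_F=\bigcap_{c\in F}C(T_c)\) is closed in any Hausdorff group topology \(\tau\).

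Closedness alone does not make \(U_F\) a \(\tau\)-neighborhood, and this is the main obstacle. The standard fix is automatic continuity via Baire category. Let \(\tau\) be a Polish group topology. Each \(U_F\) is countable index in \(\mcg(S)\) because the curve set is countable, and each \(U_F\) is \(\tau\)-closed. A countable union of cosets of \(U_F\) covers \(G\), so by the Baire category theorem some coset has nonempty interior. Hence \(U_F\) is \(\tau\)-open. Therefore the identity \((\mcg(S),\tau)\to(\mcg(S),\text{standard})\) is continuous, and by the open mapping theorem for Polish groups it is a homeomorphism.

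The remaining care is in low-complexity and degenerate cases: surfaces where curves do not determine mapping classes, such as the sphere, disk-like cases, annulus, torus, and finite-type surfaces with few punctures. For finite type, \(\mcg(S)\) is countable. Any Polish topology on a countable group is discrete by Baire, so the result is immediate. For infinite type, I must check that the curve stabilizers form a neighborhood basis, using the Alexander method for infinite-type surfaces, which is standard. I must also check that Dehn twists about distinct curves are distinct, which holds for essential nonperipheral curves. This makes the centralizer description valid and completes the argument.
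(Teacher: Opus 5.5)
Your proof is correct. Its algebraic core is the same as the paper's. The stabilizer of a curve \(c\) is the preimage of the finite set \(\{T_c,T_c^{-1}\}\) under \(f\mapsto fT_cf^{-1}\), i.e.\ the centralizer of \(T_c\) or an index-two overgroup of it, so it is closed in every Hausdorff group topology. The Alexander method then makes finite intersections of such stabilizers a neighborhood basis at the identity.

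The difference lies in how you turn closedness into uniqueness. The paper goes through the general \Cref{lem:subbasis}: a subbasis of sets that are Borel in every Polish group topology forces uniqueness, via automatic continuity of Borel homomorphisms and the Lusin--Suslin theorem. You instead use the extra structure that each \(U_F\) is a subgroup of countable index, since there are only countably many curves. In any Polish group topology \(\tau\), Baire category then makes \(U_F\) \(\tau\)-open, and the open mapping theorem finishes. Your route is more elementary and proves slightly more: the compact-open topology is coarser than every Hausdorff group topology on \(\mcg(S)\) that is a Baire space. It does, however, need the basic neighborhoods to be closed subgroups of countable index. The paper's lemma asks only for Borelness, and that is what it uses in \Cref{sec:graphs}. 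There the vertex stabilizer \(U_S\) is assembled from \(U_1U_2\) and is shown to be Borel via Lusin--Suslin, not closed. Your argument would still adapt there via Pettis's theorem, because vertex stabilizers have countable index and Borel sets have the Baire property.

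Two small points. First, the inference in your opening paragraph (``these sets are closed in every Hausdorff group topology, hence every Hausdorff group topology refines the standard one'') is invalid, since closed does not mean open. Your Baire step is what supplies the openness, so that sentence should be removed. Second, on nonorientable surfaces the twist about a two-sided curve bounding a M\"obius band is trivial, so ``distinct curves give distinct twists'' fails for such curves. You should exclude them and check that the remaining curve stabilizers still generate the topology. The paper's \Cref{lem:twist conjugates} tacitly needs the same caveat.
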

In fact, in \Cref{thm:mcg} we prove the following  stronger fact: any closed subgroup of \(\textrm{MCG}(S)\) containing all the Dehn twists has a unique Polish group topology. 
This formulation includes the subgroup of orientation-preserving mapping classes, which is also commonly referred to as the mapping class group in the case of an orientable surface.

A surface \(S\) is of \emph{finite type} when its fundamental group is finitely generated and of \emph{infinite type} otherwise.
When \( S \) is of finite type, \( \mcg(S) \) is countable and therefore trivially has a unique Polish group structure (coming from the discrete topology). However, when \( S \) is of infinite type, \( \mcg(S) \) is a non-locally compact Polish group.
Therefore, \Cref{thm_1_intro} is mainly a result about \emph{big mapping class groups}, i.e., mapping class groups of infinite-type surfaces. (For a survey, see \cite{AramayonaBig}.)

A Polish group \(G\) is said to have the \emph{automatic continuity property} if every group homomorphism \(G \to H\) is continuous whenever \( H \) is separable.
Note that this is a stronger property than having a unique Polish group topology.
Many Polish groups are known to have the automatic continuity property, including the group \(S_\infty\) of bijections of the natural numbers \cite{KechrisTurbulence}, the group \(\Homeo(2^\mathbb{N})\) of homeomorphisms of the Cantor space \cite{KwiatkowskaGroup}, and the group \(\textrm{Homeo}(M)\) where \(M\) is a compact manifold \cite{MannAutomatic}.
For more on the automatic continuity property, see \cite{RosendalAutomatic}.

A recent result of Bestvina--Domat--Rafi classifies among stable surfaces exactly which surfaces \(S\) have mapping class group with the automatic continuity property.
In particular, there are surfaces \(S\) for which \(\textrm{MCG}(S)\) has the automatic continuity property and surfaces \(S\) for which \(\textrm{MCG}(S)\) does \emph{not} have the automatic continuity property \cite{BestvinaClassification} (see also \cite{DomatBig, MannAutomatica}).
As a corollary, if \(S\) is a stable surface in which every end is telescoping (see \cite{BestvinaClassification} for definitions), then \(\mcg(S)\) has a unique Polish group topology \cite[Corollary~1.6]{BestvinaClassification}. \Cref{thm_1_intro} shows that the mapping class group of \emph{any} surface has a unique Polish group topology, even when the mapping class group fails to have the automatic continuity property.

We also consider mapping class groups of locally finite connected graphs, where for us a graph is a 1-dimensional CW complex.
A graph is \emph{locally finite} if the degree of each vertex is finite.
Recall that the \emph{rank} of a locally finite connected graph is the rank of \( \pi_1(\Gamma) \) as a free group, which is an element of \( \mathbb N \cup \{0,\infty\} \). 
For \(\Gamma\) a locally finite connected graph, \(\mcg(\Gamma)\) is the group of all proper homotopy classes of proper homotopy equivalences \(\Gamma \to \Gamma\).
These groups were defined in \cite{AlgomGroups} as a ``big'' version of the outer automorphism groups of free groups \(\textrm{Out}(\mathbb{F}_n)\).
In fact, for a finite graph, \(\mcg(\Gamma)\) is  \(\textrm{Out}(\mathbb{F}_n)\), where \(n\) is the rank of the graph.
When \(\Gamma\) is infinite, \(\mcg(\Gamma)\) is an uncountable group which supports a non-locally compact Polish group topology \cite{AlgomGroups}.

\begin{theorem}\label{thm_2_intro}
	Let \(\Gamma\) be a locally finite connected graph.
	If \( \mathrm{rank}(\Gamma) \neq 1 \), then \(\mcg(\Gamma)\) has a unique Polish group topology.
\end{theorem}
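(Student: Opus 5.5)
Since the standard topology \(\tau\) on \(\mcg(\Gamma)\) (the one from \cite{AlgomGroups}) is already Polish, the whole task is to show that any other Polish group topology \(\sigma\) equals \(\tau\). I would reduce this to a one-sided comparison. It is classical (Banach/Pettis, via the open mapping theorem for Polish groups) that a continuous bijective homomorphism between Polish groups is a homeomorphism. So it suffices to show that the identity map \((\mcg(\Gamma),\sigma)\to(\mcg(\Gamma),\tau)\) is continuous, i.e.\ that every \(\tau\)-open neighborhood of the identity is \(\sigma\)-open.

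A neighborhood basis for \(\tau\) is given by pointwise stabilizers. Concretely, fix a compact core subgraph \(K\) in an exhaustion of \(\Gamma\). The associated basic open subgroup \(V_K\) consists of mapping classes that, after proper homotopy, fix \(K\) and preserve the complementary components of \(K\).

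The key step is to express \(V_K\), or a \(\tau\)-open subgroup inside it, algebraically, as a centralizer or an intersection of centralizers. Centralizers \(C(g)=\{h: hg=gh\}\) are closed in any Hausdorff group topology, so they are \(\sigma\)-closed. A closed subgroup that is countable-index is then \(\sigma\)-open: a Polish group cannot be a countable union of nowhere dense cosets, so by Baire the subgroup has nonempty interior. So I want, for each \(K\), a finite or countable family of elements \(g_i\) such that
\[
\bigcap_i C(g_i)
\]
is a \(\tau\)-open subgroup contained in \(V_K\). Such an intersection has countable index because \(\tau\) is separable and the subgroup is \(\tau\)-open.

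Natural candidates for the \(g_i\) are ``loop-type'' elements supported in \(K\): finitely many Nielsen-type automorphisms or word maps/loop swaps whose joint centralizer forces an element to act trivially on \(\pi_1(K)\) up to the appropriate conjugation. The rank \(\neq 1\) hypothesis is needed precisely here. In rank \(1\) the centralizer of the flip is large, which breaks this, so for rank \(\ge 2\) (or rank \(0\), where trees give an end-space homeomorphism group whose uniqueness I would handle via the known automatic continuity/uniqueness for \(\Homeo\) of closed subsets of Cantor space) we use that \(\pi_1\) has trivial center.

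Alternatively, one can consider the pure subgroup and end data separately. The action on the end space is continuous, and stabilizers of clopen partitions of ends are closed with countable index.

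The main obstacle I expect is the precise centralizer computation: showing that commuting with chosen elements supported near \(K\) really forces the mapping class into \(V_K\), rather than something merely \(\tau\)-closed of uncountable index. If the centralizer intersection is only closed (not open) in \(\tau\), I would instead show it is a \(\tau\)-closed subgroup containing a \(\tau\)-open subgroup, or choose the \(g_i\) as elements with support exactly a neighborhood of \(K\) so that their centralizers contain \(V_{K'}\) for larger \(K'\). That gives countable index while still sitting inside a prescribed neighborhood, which finishes the argument.
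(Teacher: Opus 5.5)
Your outer framework is sound, and it is genuinely different from the paper's. The reduction to continuity of \((G,\sigma)\to(G,\tau)\), the fact that centralizers are \(\sigma\)-closed, and the Baire argument that a \(\sigma\)-closed subgroup of countable index is \(\sigma\)-open are all correct. If you had \(\tau\)-open subgroups that are intersections of centralizers, you would bypass the paper's Lusin--Suslin product step (\Cref{cor:product is algebraic}).

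But the proposal stops exactly where the proof has to begin, so there is a genuine gap. Openness is the comparatively easy half. Anything commuting with elements supported in a compact region contains every element supported off that region, and that subgroup is already \(\tau\)-open. Your fallback, ``a \(\tau\)-closed subgroup containing a \(\tau\)-open subgroup'', is automatically open and buys nothing. The whole difficulty is the reverse containment \(\bigcap_i C(g_i)\subseteq V_K\), and your suggested mechanism does not address it:
\begin{itemize}
\item An element of the centralizer is not a priori known to preserve \(K\), so ``acting trivially on \(\pi_1(K)\)'' is not even defined for it.
\item The trivial center of \(\pi_1\) plays no role.
\item For a finite family \(g_i\), the joint centralizer contains the centralizer of \(\langle g_i\rangle\) among elements supported in \(K\) (for instance any \(g_j\) commuting with the others). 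You would have to show this is trivial.
\end{itemize}

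In the paper this step is \Cref{lem:support of centralizers} together with \Cref{lem:intersection of centralizers}. Take a basic separating sphere \(S\) with sides \(M_1, M_2\), and an element \(\vp\) commuting with all of \(U_1\).
\begin{itemize}
\item \(\vp\) must fix \(S\) and preserve \(M_1\). This uses that spheres in \(M_1\) have infinite \(U_1\)-orbits (\Cref{lem:infinite orbit}), which rules out \(\vp(S)\) being disjoint from \(S\). The canonical finite set of spheres built from \(S\cap\vp(S)\) rules out \(\vp(S)\) crossing \(S\).
\item The \(M_{1,2}\) involutions of \Cref{lem: m12} then force \(\vp\) to fix every sphere in \(M_1\), giving \(C(U_1)=U_2\).
\end{itemize}
The infinite-orbit property is exactly where rank \(\ge 2\) enters: by \Cref{rem: failure}, in pieces of genus \(\le 1\) spheres can have finite orbits. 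It has nothing to do with a flip in rank one.

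Note also that the paper's \(U_i\) are closed but not \(\tau\)-open when both sides are noncompact. That is why it writes \(U_S=U_1U_2\) (or \(U_1U_2\cup\sigma U_1U_2\)) and uses \Cref{lem:new subbasis}. To run your Baire version you would need an analogue of \Cref{lem:intersection of centralizers} for compact regions with several boundary spheres. A single sphere will not do, because a sphere separating ends never lies in a compact region bounded by one sphere. None of this is supplied.

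For rank \(0\), the paper uses Kallman's uniqueness theorem, which gives the result directly. You should not rely on automatic continuity there.
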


As in the surface case, we give criteria for a closed subgroup of \( \mcg(\Gamma) \) to have a unique Polish group structure (\Cref{prop: graph subgroup}).
In particular, the \emph{pure mapping class group} \( \pmcg(\Gamma) \) consisting of mapping classes fixing each end of \( \Gamma \) has a unique Polish structure (\Cref{cor: pmcg graph}). 

Our proof of \Cref{thm_2_intro} does not hold for $\mathrm{rank}(\Gamma) = 1$, but we have no reason to believe the theorem is not true in this case. The exact failure is in \Cref{lem:infinite orbit} (see \Cref{rem: failure}).

It is worth noting that other than a few specific cases---for example, when \(\Gamma\) is the full binary tree and hence \(\mcg(\Gamma)\) is the homeomorphism group of Cantor space---it appears it is not known if mapping class groups of locally finite graphs have the automatic continuity property.

For the rank zero case of \Cref{thm_2_intro}, we refer the reader to a result of Kallman \cite{KallmanUniqueness}, which establishes, under mild hypotheses, the uniqueness of the Polish group structure for homeomorphism groups of a topological space.
In particular, if \( \Gamma \) is a rank zero locally finite connected graph, then \( \mcg(\Gamma) \) is isomorphic to the homeomorphism group of its end space \cite{AlgomGroups}.
The end space is compact, second countable, and zero-dimensional, allowing us to apply Kallman's theorem. 

Note that although Kallman's result is broad enough to apply to the homeomorphism group of a surface \(S\), the result does not imply \Cref{thm_1_intro}.
The reason is that although \(\Homeo(S)\) has a unique Polish topology, we do not know---a priori---that every Polish topology on \(\mcg(S)\) is the quotient topology.

If a group \(G\) has a unique Polish topology, then the group \(\Aut(G)\) of automorphisms of \(G\) must preserve this topology, which is to say, each automorphism \(\Phi \colon G \to G\) is continuous.
One consequence of \Cref{thm_1_intro} and \Cref{thm_2_intro} is therefore that automorphisms of mapping class groups of surfaces and locally finite graphs are continuous. 
In the case of big mapping class groups, this is immediate since every automorphism is inner \cite{BavardIsomorphisms}.
In the case of mapping class groups of graphs, these automorphism groups have yet to be determined in generality.
We therefore record this as a corollary, as we believe it will be useful in resolving this question.

\begin{corollary} 
\label{cor:aut contin}
	Let \( \Gamma \) be a locally finite connected graph.
	If \( \mathrm{rank}(\Gamma) \neq 1 \), then every automorphism of \( \mcg(\Gamma) \) is continuous. 
\end{corollary}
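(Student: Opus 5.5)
The corollary should follow formally from \Cref{thm_2_intro}, through the observation made just before its statement: transporting a Polish group topology along an automorphism gives another Polish group topology.

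Let \(\tau\) be the standard Polish group topology on \(G = \mcg(\Gamma)\), the one from \cite{AlgomGroups}. Let \(\Phi \colon G \to G\) be an automorphism. Define \(\tau_\Phi = \{\Phi^{-1}(U) : U \in \tau\}\). Then \(\Phi \colon (G,\tau_\Phi) \to (G,\tau)\) is both an isomorphism of groups and a homeomorphism. Hence \((G,\tau_\Phi)\) is a topological group, since multiplication and inversion are conjugated by \(\Phi\) to those of \((G,\tau)\). It is also Polish, because being separable and completely metrizable is invariant under homeomorphism.

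Because \(\mathrm{rank}(\Gamma) \neq 1\), \Cref{thm_2_intro} says that \(G\) carries exactly one Polish group topology. Therefore \(\tau_\Phi = \tau\). So for every \(U \in \tau\), the preimage \(\Phi^{-1}(U)\) lies in \(\tau_\Phi = \tau\), which means \(\Phi\) is continuous with respect to \(\tau\).

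Applying the same argument to \(\Phi^{-1}\) shows that \(\Phi\) is in fact a homeomorphism. Nothing here is a real obstacle: all the substance lies in \Cref{thm_2_intro}, and the only point needing care is that the transported topology really is a Polish group topology, which is immediate.
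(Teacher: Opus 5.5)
Your proposal is correct and follows the paper's own reasoning: the paper deduces this corollary from \Cref{thm_2_intro} by observing that a unique Polish group topology must be preserved by every automorphism. You spell out the implicit step, namely that pulling the topology back along \(\Phi\) gives another Polish group topology, which uniqueness then forces to equal the original one.
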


Here is how the remainder of the paper is structured.
In Section \ref{sec:algebraic sets}, we prove some general lemmas we need about algebraic sets in topological groups.
In Section \ref{sec:surfaces}, we prove Theorem \ref{thm_1_intro} about surfaces and in Section \ref{sec:graphs} we prove Theorem \ref{thm_2_intro} about locally finite connected graphs.

\subsection*{Acknowledgements}
This project spawned at the BIRS workshop \textit{Blooming Beasts: A Conference on the Topology, Geometry, and Dynamics of Infinite-Type Surfaces}, held in Oaxaca in June 2025.
We would like to thank the organizers for putting on such an enjoyable and productive workshop. We thank George Shaji for helpful conversations.
The first author is supported by the NSERC Discovery Grant RGPIN-2026-04983. The second author is supported by NSF-MSPRF grant DMS-2402039.
The fourth author was supported in part by NSF DMS-2212922 and PSC-CUNY Awards 67380-00 55 and 68352-00 56.

\section{Algebraic sets}
\label{sec:algebraic sets}

	The basic idea behind showing that a Polish group structure on a group is unique is to establish that the identity homomorphism between any two Polish group structures is continuous.
	To accomplish this, we must find enough sets that are Borel in any Polish group structure and appeal to the fact that Borel homomorphisms between Polish groups are automatically continuous \cite[Theorem~9.10]{KechrisClassical}.
	Let us recall some basic definitions so we can set up these required tools.

	Let \( X \) be a topological space.
	The \emph{Borel \(\sigma\)-algebra} on \( X \) is the smallest set containing the open subsets of \( X \) and that is closed under taking complements and countable unions. 
	A subset of \( X \) is \emph{Borel} if it is contained in the Borel \(\sigma\)-algebra.
	Note that the Borel \(\sigma\)-algebra is closed under countable intersections. 
	On occasion, we need to specify the topology with respect to which a given subset is Borel; in this case, given a topology \( \tau \) on a set \( X \), we say that a subset is \emph{\(\tau\)-Borel} if it is Borel in \( (X,\tau) \). 

	A topological space is \emph{Polish} if it is separable and admits a compatible complete metric; a topological group is \emph{Polish} if its underlying topology is Polish.
	A \emph{Polish group structure} on a group \( G \) is a group topology \( \tau \) on \( G \) such that \( (G, \tau) \) is a Polish group. 
	An abstract group homomorphism \( \vp \co G \to H \) between topological groups is \emph{Borel} if \( \vp^{-1}[A] \) is Borel whenever \( A \subseteq H \) is Borel.

	We will use several times the fact that a Borel homomorphism between Polish groups is continuous, as well as the Lusin--Suslin theorem, so we state both here.

	\begin{lemma}[{\cite[Theorem~9.10]{KechrisClassical}}] 
	\label{lem:borel is continuous}
		Every Borel homomorphism between Polish groups is continuous. 	
		\qed
	\end{lemma}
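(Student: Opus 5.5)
The plan is the classical Banach--Pettis route: first show that a Borel homomorphism \(\vp\co G\to H\) between Polish groups has the Baire property as a map, then upgrade this to continuity using Pettis' lemma. Continuity at the identity suffices, because \(\vp\) is a homomorphism and translations are homeomorphisms. So fix an open neighbourhood \(V\) of \(1_H\), and choose an open neighbourhood \(W\) of \(1_H\) with \(W^{-1}W\subseteq V\). The goal is to find an open neighbourhood \(U\) of \(1_G\) with \(\vp[U]\subseteq V\).

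\emph{Step 1: some translate of the preimage is non-meagre.} Since \(H\) is separable, countably many translates \(h_nW\) cover \(H\). Hence \(G=\bigcup_n \vp^{-1}[h_nW]\). Each set \(\vp^{-1}[h_nW]\) is Borel because \(\vp\) is Borel. By the Baire category theorem for the Polish space \(G\), some \(A=\vp^{-1}[h_nW]\) is non-meagre. A Borel set has the Baire property, so \(A\) differs from some nonempty open set by a meagre set.

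\emph{Step 2: Pettis' lemma.} I then invoke Pettis' lemma: if \(A\subseteq G\) has the Baire property and is non-meagre, then \(A^{-1}A\) contains an open neighbourhood \(U\) of \(1_G\). To prove it, pick a nonempty open set \(O\) such that \(A\) is comeagre in \(O\). Choose \(g\in O\) and a neighbourhood \(U\) of \(1_G\) with \(gU\subseteq O\) and \(gU^{-1}U\subseteq O\); this is possible by continuity of multiplication. For \(u\in U\), both \(A\) and \(Au\) are comeagre in the nonempty open set \(O\cap Ou\), which contains \(gu\). By the Baire category theorem they therefore intersect, and so \(u\in A^{-1}A\).

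\emph{Step 3: conclusion.} For \(u\in U\subseteq A^{-1}A\), write \(u=a^{-1}b\) with \(\vp(a),\vp(b)\in h_nW\). Then
\[
\vp(u)=\vp(a)^{-1}\vp(b)\in W^{-1}h_n^{-1}h_nW=W^{-1}W\subseteq V,
\]
so \(\vp[U]\subseteq V\), as required.

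The main obstacle is Step 2. Pettis' lemma is where the Baire property and the group structure genuinely interact. The delicate points are checking that right translation preserves comeagreness within open sets, and choosing \(U\) so that the relevant open set \(O\cap Ou\) is nonempty. Everything else is bookkeeping with countable covers and the Baire category theorem. Complete metrizability enters only through Baire category. Separability enters only in the countable cover of \(H\) in Step 1, so that the union \(\bigcup_n \vp^{-1}[h_nW]\) is countable.
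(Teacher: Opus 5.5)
Your argument is correct, including the Pettis step. It is the standard proof of \cite[Theorem~9.10]{KechrisClassical}: cover \(H\) by countably many translates, use Baire category to get a non-meagre Borel preimage, then apply Pettis' lemma to it. The paper gives no proof of its own beyond this citation, so your route is exactly the intended one.
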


	\begin{theorem}[Lusin--Suslin Theorem {\cite[Theorem~15.1]{KechrisClassical}}]
		If \( f \co X \to Y \) is a Borel injection between Polish spaces, then \( f[U] \) is a Borel subset of \( Y \) for every Borel subset \( U \) of \( X \). 
		\qed
	\end{theorem}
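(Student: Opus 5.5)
The plan is to prove the Lusin--Suslin theorem by reducing it to Souslin's theorem: a subset of a Polish space that is both analytic and coanalytic is Borel. Everything is then organized around this idea.

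First I would reduce to the case where \( f \) is continuous and \( X \) is a closed subset of the Baire space \( \bn^\bn \). The graph of \( f \) is Borel because \( f \) is Borel. Refining the topology of \( X \) to a finer Polish topology with the same Borel sets makes \( f \) continuous, and for this I will take the standard change-of-topology result as known. We may also replace \( X \) by the Borel set \( U \) itself, once that set has been made clopen in a further refinement of the topology. So it suffices to show that if \( f\co X\to Y \) is a continuous injection on a Polish space \( X \), then \( f[X] \) is Borel. Finally, every Polish space is a continuous bijective image of a closed subset \( F\subseteq\bn^\bn \). Composing, we may assume \( X=F \) is closed in \( \bn^\bn \).

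Next comes the key step, a Lusin separation argument. For each finite sequence \( s \), let \( F_s=F\cap N_s \), where \( N_s \) is the basic cylinder. The sets \( A_s=f[F_s] \) are analytic. Because \( f \) is injective, the sets \( A_{s^\frown i} \) for \( i\in\bn \) are pairwise disjoint. By the Lusin separation theorem, disjoint analytic sets can be separated by Borel sets, and applying this countably many times gives pairwise disjoint Borel sets \( B'_{s^\frown i}\supseteq A_{s^\frown i} \). Set \( B_\emptyset=Y \) and define inductively
\[ B_{s^\frown i}=B'_{s^\frown i}\cap B_s\cap \overline{A_{s^\frown i}}. \]
Then the \( B_s \) are Borel, satisfy \( A_s\subseteq B_s\subseteq\overline{A_s} \), decrease along branches, and are pairwise disjoint at each level.

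I then claim that
\[ f[F]=\bigcap_n\bigcup_{|s|=n}B_s, \]
which is Borel. The inclusion \( \subseteq \) is immediate. For \( \supseteq \), fix \( y \) in the right-hand side. Disjointness at each level and nesting give a unique branch \( x\in\bn^\bn \) with \( y\in B_{x|n} \) for all \( n \). Each \( B_{x|n} \) is nonempty, so \( F_{x|n}\ne\emptyset \) for every \( n \), and since \( F \) is closed, \( x\in F \). Because \( y\in\overline{f[F_{x|n}]} \) and the diameters of the \( N_{x|n} \) shrink, continuity of \( f \) at \( x \) gives \( y=f(x) \).

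I expect the main obstacle to be the bookkeeping in this last step. One must make sure that disjointness is preserved under the inductive intersections, and that taking closures is harmless, which holds because closed sets are Borel. The separation theorem itself I would take as standard, citing Kechris.
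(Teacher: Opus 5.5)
Your argument is correct and is essentially the standard proof in Kechris's book (Theorem~15.1 together with the change-of-topology reduction for Borel maps). The paper does not prove this statement; it only cites that reference, so your proposal follows the same route as the source the paper relies on. One cosmetic slip: with \( B_\emptyset = Y \), the inclusion \( B_s \subseteq \overline{A_s} \) can fail for \( s=\emptyset \), but you only use it for \( s\neq\emptyset \) (or you could set \( B_\emptyset=\overline{f[F]} \)), so nothing breaks.
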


	We will say that a subset \( A \) of a group \( G \) is \emph{algebraic} or \emph{algebraically Borel} if for any Polish group topology \( \tau \) on \( G \), the set \( A \) is \( \tau \)-Borel. The next lemma is key to the rest of the paper. Although it is well-known, we include a proof for completeness.

	\begin{lemma}\label{lem:subbasis}
		Let \( (G, \tau) \) be a Polish group.
		If \( (G, \tau) \) admits a neighborhood subbasis of the identity consisting of algebraically Borel sets, then \( \tau \) is the unique topology on \( G \) making \( G \) a Polish topological group.
	\end{lemma}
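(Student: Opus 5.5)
Let \( \sigma \) be any Polish group topology on \( G \). We show \( \sigma = \tau \) by checking that the identity map \( \mathrm{id}\co (G,\sigma) \to (G,\tau) \) is continuous, and then invoking the Lusin--Suslin theorem (or symmetry) to get continuity of the inverse.

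\textbf{Step 1: continuity of \( \mathrm{id}\co (G,\sigma)\to(G,\tau) \).} Let \( \mathcal B \) be the given subbasis at the identity, consisting of algebraically Borel sets. For \( A \in \mathcal B \), \( A \) is \( \sigma \)-Borel by definition of algebraic. Every \( \tau \)-open set \( U \) is a union of left translates \( gV \), where \( V \) ranges over finite intersections of members of \( \mathcal B \). Since \( (G,\tau) \) is separable, hence second countable, and \( \mathcal B \) may be replaced by a countable subfamily that is still a subbasis at the identity, countably many such translates suffice. To see this, use a countable dense set \( D \) together with the fact that for each basic \( V \) there is a basic \( W \) with \( W^{-1}W\subseteq V \), so that \( U=\bigcup\{dW : d\in D,\ dW\subseteq U\} \) for suitable \( W \). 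This is the step requiring the most care.

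Concretely, fix a countable neighborhood base \( \{W_n\} \) of the identity for \( \tau \). Each \( W_n \) contains a finite intersection \( V_n \) of members of \( \mathcal B \). These \( V_n \) (not necessarily open, but neighborhoods of the identity) still form a neighborhood base, and they are algebraically Borel, since finite intersections of Borel sets are Borel. Now take \( U \) \( \tau \)-open and \( x\in U \). Choose \( n \) with \( xV_{m}V_m^{-1}\ldots \) small. More simply, choose \( m \) with \( V_m^{-1}V_m \subseteq x^{-1}U \), pick \( d\in D\cap x\,\mathrm{int}(V_m) \), and then \( x\in dV_m^{-1}\subseteq dV_m^{-1}V_m\ldots \). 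To avoid inverses, replace \( V_n \) by \( V_n\cap V_n^{-1} \), which is still algebraically Borel because inversion is a homeomorphism for \( \sigma \). Then \( x\in dV_m\subseteq xV_mV_m\subseteq U \). Hence \( U=\bigcup\{dV_m: d\in D,\ m\in\bn,\ dV_m\subseteq U\} \) is a countable union of left translates of \( \sigma \)-Borel sets. Left translation is a \( \sigma \)-homeomorphism, so each translate is \( \sigma \)-Borel, and therefore \( U \) is \( \sigma \)-Borel.

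Thus every \( \tau \)-open set is \( \sigma \)-Borel, so \( \mathrm{id}\co(G,\sigma)\to(G,\tau) \) is a Borel homomorphism between Polish groups. By \Cref{lem:borel is continuous} it is continuous, giving \( \tau\subseteq\sigma \).

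\textbf{Step 2: the reverse inclusion.} The map \( \mathrm{id}\co(G,\sigma)\to(G,\tau) \) is a continuous, hence Borel, bijection between Polish spaces. By the Lusin--Suslin theorem it sends \( \sigma \)-Borel sets to \( \tau \)-Borel sets, so every \( \sigma \)-open set is \( \tau \)-Borel. Applying \Cref{lem:borel is continuous} to the inverse map \( (G,\tau)\to(G,\sigma) \) shows it is continuous, so \( \sigma\subseteq\tau \). Therefore \( \sigma=\tau \).
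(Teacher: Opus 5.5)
Your proof is correct and follows essentially the same route as the paper. You write each $\tau$-open set as a countable union of translates, by a countable dense set, of finite intersections of the algebraically Borel subbasic sets; you conclude that $\mathrm{id}\colon(G,\sigma)\to(G,\tau)$ is a Borel homomorphism and hence continuous; and you get the reverse inclusion from Lusin--Suslin. The paper simply asserts that $\{g_nU\}$ is a basis, and your symmetrization and $V_mV_m\subseteq x^{-1}U$ argument supplies that missing detail.
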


	\begin{proof}
		As the property of being algebraic is closed under finite intersections, if \( \tau \) admits a neighborhood subbasis of the identity consisting of algebraic sets, it also admits a neighborhood basis of the identity consisting of algebraic sets.
		Moreover, as \( \tau \) is metrizable, it is first countable; in particular, there exists a countable neighborhood basis, call it \( \mathcal B_1 \), of the identity consisting of algebraic sets.
		Now, as \( \tau \) is separable, there exists a countable dense subset \( \{ g_n \}_{n\in \bn} \) in \( G \).
		It follows that \( \mathcal B = \{ g_n U: n \in \bn, U \in \mathcal B_1\} \) is a countable basis for \( \tau \) consisting of algebraic sets. 
		By the countability of \( \mathcal B \), every open subset is a countable union of algebraic sets, implying that every open set in \( \tau \) is algebraic.
		Therefore, every \( \tau \)-Borel set is algebraic.

		Suppose that \( (G, \mu) \) is a Polish group. 
		We claim \(\mu =\tau\). 
		Since every set in \( \tau \) is algebraic, every set in \( \mathcal B \) is \(\mu\)-Borel, implying that the map \( id \co (G, \mu) \to (G,\tau) \) is a Borel group homomorphism, and hence it is continuous by \Cref{lem:borel is continuous}.
		Therefore, \( \tau \subseteq \mu \). 
		
		Now, as \( id \co ( G, \mu) \to ( G,\tau) \) is injective and Borel, the Lusin--Suslin theorem implies that \( U = id[U] \) is \( \tau \)-Borel for every \( U \in \mu \).
		It follows that the map \( id \co (G, \tau) \to (G,\mu) \) is a Borel group homomorphism and hence continuous, again by \Cref{lem:borel is continuous}.
		Therefore \( \mu \subseteq \tau \), which completes the proof. 
	\end{proof}

	Next, we will need a technique for certifying a set is algebraic.
	In fact, we will need a property stronger than algebraicity.  
	We say a subset \( A \) of a group \( G \) is \emph{algebraically closed} if for any Hausdorff group topology \( \tau \) on \( G \), the set \( A \) is \( \tau \)-closed.
	The key behind our proofs for mapping class groups below is that centralizers of group elements are algebraically closed, which we record in the following lemma.

	\begin{lemma} 
	\label{lem:centralizers}
		The centralizer of an element in a group is an algebraically closed subset of the group.
	\end{lemma}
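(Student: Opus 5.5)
The plan is to exhibit the centralizer as the preimage of a closed point under a continuous map. Fix a group \( G \), an element \( g \in G \), and an arbitrary Hausdorff group topology \( \tau \) on \( G \). I will write the centralizer as
\[
C_G(g) = \{ h \in G : hgh^{-1}g^{-1} = 1 \},
\]
so that it is the preimage of the identity under the map \( c_g \co G \to G \) given by \( c_g(h) = hgh^{-1}g^{-1} \).

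First I check that \( c_g \) is \( \tau \)-continuous. It is built from the identity map, left and right multiplication by fixed elements, inversion, and multiplication \( G \times G \to G \). Each of these is continuous in a topological group, so \( c_g \) is continuous.

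Next I use the Hausdorff hypothesis. In a Hausdorff space points are closed, so \( \{1\} \) is \( \tau \)-closed. Hence \( C_G(g) = c_g^{-1}[\{1\}] \) is \( \tau \)-closed. Since \( \tau \) was an arbitrary Hausdorff group topology, \( C_G(g) \) is algebraically closed.

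There is no genuine obstacle here. The only point needing care is that the Hausdorff assumption is exactly what makes \( \{1\} \) closed, and that assumption is built into the definition of algebraically closed. As a remark, the same argument shows that the centralizer of any subset is algebraically closed, since it is an intersection of closed sets. It also shows that algebraically closed sets are algebraically Borel, because Polish group topologies are Hausdorff.
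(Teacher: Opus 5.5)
Your proof is correct and is essentially the paper's argument. The paper realizes the centralizer as the stabilizer of \( g \) under the continuous conjugation action, i.e.\ the preimage of the closed point \( \{g\} \) under \( h \mapsto hgh^{-1} \), whereas you take the preimage of \( \{1\} \) under the commutator map \( h \mapsto hgh^{-1}g^{-1} \); both rely only on continuity of the group operations and the Hausdorff property.
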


	\begin{proof}
		Let \( G \) be a Hausdorff topological group.
		The conjugation action of \( G \) on itself is continuous.
		Under the conjugation action, the stabilizer of an element \( g \) in \( G \) is exactly the centralizer of \( g \) in \( G \).
		It follows that the centralizer of any element in any Hausdorff group is closed.
		Therefore, centralizers are algebraically closed subsets in any group.
	\end{proof}

	Our final lemma allows us to take the product of two Borel subgroups to obtain a new Borel subset, under the condition that the intersection of the subgroups is trivial.

	\begin{lemma} 
	\label{lem:product is borel}
		Let \( H \) and \( K \) be Borel subgroups of a Polish group \( G \).
		If \( H \cap K = \{1\} \), then the set \( HK = \{ hk : h \in H, k \in K \} \) is Borel in \( G \).
	\end{lemma}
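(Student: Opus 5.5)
The plan is to realize \( HK \) as the injective Borel image of a Borel subset of a Polish space and then apply the Lusin--Suslin theorem. Consider the product \( G \times G \), which is Polish, and the continuous multiplication map \( m \co G \times G \to G \), \( m(h,k) = hk \). Since \( H \) and \( K \) are Borel in \( G \), the rectangle \( H \times K \) is Borel in \( G \times G \), and \( HK = m[H \times K] \). The map \( m \) is not injective on all of \( G\times G \), so Lusin--Suslin cannot be applied to it directly. The hypothesis \( H \cap K = \{1\} \) gives exactly the injectivity we need on \( H \times K \): if \( hk = h'k' \) with \( h,h' \in H \) and \( k,k' \in K \), then \( h'^{-1}h = k'k^{-1} \in H \cap K = \{1\} \), so \( h = h' \) and \( k = k' \).

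The remaining issue, which is the main technical point, is that Lusin--Suslin as stated requires an injection defined on a Polish space, while \( H \times K \) is only a Borel subset of one. I would handle this with the standard fact that a Borel subset \( B \) of a Polish space \( X \) admits a finer Polish topology on \( X \) with the same Borel sets in which \( B \) is clopen (Kechris, Theorem~13.1). Then \( B \), with the subspace topology from the finer topology, is Polish, and its Borel sets are the original Borel sets contained in \( B \). Applying this to \( B = H \times K \subseteq G \times G \) yields a Polish space \( Y \) with underlying set \( H \times K \). The restriction \( m|_Y \co Y \to G \) is Borel, since \( m \) is continuous for the original topology and the Borel structures agree, and it is injective by the computation above.

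Lusin--Suslin then shows that \( m|_Y[Y] = HK \) is Borel in \( G \). Alternatively, one can avoid changing topologies by using the version of Lusin--Suslin for injective Borel maps on Borel subsets of Polish spaces (Kechris, Corollary~15.2), which gives the result immediately. I expect no real obstacle beyond this bookkeeping. The only place the trivial-intersection hypothesis is used is the injectivity step, and it is essential: without injectivity the image would in general only be analytic.
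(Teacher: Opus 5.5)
Your proof is correct and is essentially the paper's argument: restrict the continuous multiplication map \(G\times G\to G\) to the Borel set \(H\times K\), use \(H\cap K=\{1\}\) to get injectivity there, and apply Lusin--Suslin. Your extra bookkeeping about applying Lusin--Suslin to a Borel subset rather than to a whole Polish space fixes a point that the paper's paraphrase of the theorem glosses over. It is not actually needed, since the cited Kechris Theorem~15.1 is already stated for a continuous map that is injective on a Borel subset.
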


	\begin{proof}

		As \( H \times K \) is a product of Borel subsets of \( G \), it is Borel in \( G \times G \). 
		Letting \( \vp\co G\times G \to G \) be the multiplication map, we have that \( \vp \) is continuous and \( \vp \restriction_{H\times K} \) is injective, as \( H \cap K = \{1\} \).
		Therefore, \( H K = \vp[H \times K] \) is Borel by the Lusin--Suslin theorem.
	\end{proof}

	We will use \Cref{lem:product is borel} in the case where \( H \) and \( K \) are algebraic; we therefore record this setting as a corollary.

	\begin{corollary}
		\label{cor:product is algebraic}
		Let \( H \) and \( K \) be algebraic subgroups of a group \( G \).
		If \( H \cap K = \{1\} \), then the set \( HK \) is algebraic in \( G \).
		\qed
	\end{corollary}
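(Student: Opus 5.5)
The corollary follows by unwinding the definition of ``algebraic'' and applying \Cref{lem:product is borel} one topology at a time. To show that \( HK \) is algebraic, we must check that \( HK \) is \( \tau \)-Borel for an arbitrary Polish group topology \( \tau \) on \( G \). So fix such a \( \tau \); from here on everything happens inside the single Polish group \( (G,\tau) \).

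First, since \( H \) and \( K \) are algebraic subsets of \( G \), the definition of algebraicity applied to \( \tau \) shows that \( H \) and \( K \) are \( \tau \)-Borel. They are subgroups of \( G \) by hypothesis, and the hypothesis \( H \cap K = \{1\} \) is a purely algebraic condition that does not depend on the topology. Thus \( H \) and \( K \) are Borel subgroups of the Polish group \( (G,\tau) \) with trivial intersection. \Cref{lem:product is borel} then gives that \( HK \) is \( \tau \)-Borel.

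Since \( \tau \) was an arbitrary Polish group topology on \( G \), the set \( HK \) is \( \tau \)-Borel for every such topology, which is exactly the statement that \( HK \) is algebraic.

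There is no real obstacle here. The one point to check is that the injectivity argument in \Cref{lem:product is borel}, namely that \( hk = h'k' \) implies \( h'^{-1}h = k'k^{-1} \in H \cap K \), uses only the group structure. Hence it holds uniformly for every choice of \( \tau \), so the Lusin--Suslin theorem can be applied in each \( (G,\tau) \) separately.
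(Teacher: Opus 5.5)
Your proposal is correct and matches the paper's intended argument: the corollary is stated with no separate proof, as the direct application of \Cref{lem:product is borel} in each Polish group topology \( \tau \) on \( G \), which is exactly what you do. Your remark that the injectivity of multiplication on \( H \times K \) uses only the group structure, and so holds for every \( \tau \), is the right point to note.
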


\section{Surfaces}\label{sec:surfaces}

	The goal of this section is to prove that the mapping class group of a surface has a unique Polish group structure (\Cref{thm:mcg}). 
	This is accomplished by recalling a particular subbasis for the compact-open topology on mapping class groups and a standard fact about centralizers of Dehn twists. 
	After recalling some basics, we record these two facts in the two lemmas below.

	Let \( S \) be a surface (without boundary as we always assume).
	The \emph{mapping class group} of \( S \), denoted \( \mcg(S) \), is the group of isotopy classes of homeomorphisms \( S \to S \). 
	Recall that the \emph{compact-open} topology on \( \Homeo(S) \) has a subbasis given by sets of the form \[ U(K,W) = \{ f \in \Homeo(S) : f(K) \subset W \}, \] where \( K \subseteq S \) is compact and \( W \subseteq S \) is open. 
	Letting \( \Homeo_0(S) \) denote the connected component of the identity in \( \Homeo(S) \), we can identify \( \mcg(S) \) with the quotient group \( \Homeo(S) / \Homeo_0(S) \). 
	We refer to the associated quotient topology on \( \mcg(S) \) as the \emph{compact-open topology} as well.

	A \emph{simple closed curve} in \( S \) refers to the image of an embedding of the circle.
	A simple closed curve is \emph{essential} if no component of its complement has closure homeomorphic to a disk or once-punctured disk; it is \emph{two-sided} if it has a neighborhood homeomorphic to an annulus.

	Given a surface \( S \), let \( \mathcal C_0(S) \) denote the set of isotopy classes of essential two-sided simple closed curves in \( S \).
	Given \( c \in \mathcal C_0(S) \), let \( U_c = \{ f \in \mcg(S) : f(c) = c \} \).
	The Alexander Method implies that \( \{U_c : c \in \mathcal C_0(S) \} \) is a neighborhood basis of the identity in the compact-open topology (the most general version of the Alexander Method for infinite-type surfaces is established in \cite{ShapiroAlexander}).

	\begin{lemma}\label{lem:permutation basis}
		Let \( S \) be an infinite-type  surface.
		The set \[ \{ U_c : c \in \mathcal C_0(S) \} \] is a neighborhood subbasis of the identity in the compact-open topology on \( \mcg(S) \).
		\qed
	\end{lemma}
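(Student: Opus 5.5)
We must show two things: each \( U_c \) is open in the compact-open (quotient) topology on \( \mcg(S) \), and every neighborhood of the identity contains a finite intersection \( U_{c_1}\cap\cdots\cap U_{c_n} \). Let \( \pi\co \Homeo(S)\to\mcg(S) \) be the quotient map. Since \( \pi \) is an open map (quotient maps of topological groups by subgroups are open), it suffices to work upstairs in \( \Homeo(S) \).

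\emph{Openness of \( U_c \).} Fix \( c\in\mathcal C_0(S) \) and a representative curve \( \gamma \). Because \( \gamma \) is two-sided, it has a closed annular neighborhood \( A \) with core \( \gamma \). We show that the set of homeomorphisms \( f \) with \( f(\gamma)\subset \operatorname{int}A \) and \( f \) preserving the "core" (e.g.\ \( f(\gamma)\) not nullhomotopic in \(A\)) is open and consists of elements mapping \( c \) to \( c \).

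The cleanest way is to pick an arc \( \alpha \) crossing \( A \) once, and require \( f(\gamma)\subset\operatorname{int}A \) together with \( f(\partial$-arcs$) \) staying in small neighborhoods, which is a finite intersection of sets \( U(K,W) \). A simple closed curve in the interior of an annulus that is essential in the annulus is isotopic to the core. Hence any such \( f \) satisfies \( f(c)=c \).

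To arrange essentiality, require \( f(\gamma) \) to meet a small neighborhood of each of finitely many points of \( \gamma \) in order. Alternatively, argue by continuity: a homeomorphism \( C^0 \)-close to the identity on \( \gamma \) is homotopic to the inclusion within \( A \) by a straight-line homotopy in the annulus coordinates. A homotopic simple closed curve in an annulus is isotopic to the core, by the classical finite-type theory. Thus \( \pi^{-1}(U_c)\cdot \) contains a neighborhood of each of its points. Since \( \pi^{-1}(U_c) \) is a subgroup, it suffices to check a neighborhood of the identity, so \( U_c \) is open.

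\emph{Subbasis property.} This is the main step, and is where the Alexander Method enters. Take a basic neighborhood of the identity in \( \Homeo(S) \), determined by a compact set \( K \) and an \( \varepsilon \). Exhaust \( S \) by a compact subsurface \( \Sigma\supset K \) with essential boundary. Choose a finite collection \( c_1,\dots,c_n \) of curves filling \( \Sigma \) and including its boundary curves, forming an Alexander system as in \cite{ShapiroAlexander}.

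If \( f \) fixes each \( c_i \), we still need control on the orientation of each curve and on possible permutations of the complementary components. To handle this, enlarge the system with extra curves so that any mapping class fixing all the \( c_i \) restricts on \( \Sigma \) to an element of a finite group. Then use a further finite set of curves to rule out the nontrivial elements of that finite group, possibly by choosing curves whose stabilizers meet trivially there.

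With this done, \( f \) is isotopic to a homeomorphism that is the identity on \( \Sigma \). Such an element lies in \( \pi \) of any given neighborhood, since \( \pi \) of the set of homeomorphisms fixing \( \Sigma \) pointwise is contained in \( \pi(U(K,W)\cap\cdots) \). Consequently finite intersections of the \( U_c \) form a neighborhood basis.

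I expect the main obstacle to be this last finite-group/orientation bookkeeping. I would handle it by citing the Alexander Method statement that mapping classes fixing a suitable finite system of curves (each of which, beyond the boundary curves, is non-peripheral) restrict to the identity on \( \Sigma \). Low-complexity pieces such as pants are handled by adding curves outside \( \Sigma \), which is possible because \( S \) is of infinite type.
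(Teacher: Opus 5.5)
Your proposal is correct and follows essentially the same route as the paper, which proves this lemma only by appeal to the Alexander Method (in the general infinite-type form of \cite{ShapiroAlexander}). You unpack that citation in two parts: a direct annulus argument showing each \(U_c\) is open, and a reduction, via a compact subsurface \(\Sigma \supset K\), to the finite-type Alexander Method followed by a cleanup of a finite group.

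The one step to tighten is that cleanup. Adding curves outside \(\Sigma\) is awkward, because their images depend on the mapping class away from \(\Sigma\). Instead, choose \(\Sigma\) of high enough complexity at the outset, which is possible because \(S\) has infinite type. Then no nontrivial mapping class of the interior of \(\Sigma\) that fixes each of its ends acts trivially on its curves. This rules out symmetries such as the hyperelliptic involution of a one-holed torus or a reflection of a pair of pants.
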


	Let us recall the most basic of mapping classes, the Dehn twist. 
	Let \( A = \mathbb S^1 \times [0,1] \), where \( \mathbb S^1 \) is the circle.
	Define a homeomorphism \( D \co A \to A \) by \( (\theta, t) \mapsto (\theta + 2\pi t, t) \).
	Note that \( D \) restricts to the identity on \( \partial A \).
	Therefore, given a two-sided simple closed curve \( \alpha \) in \( S \) and an embedding \( \iota \co A \to S \) with \( \iota\left(\mathbb S^1 \times \{1/2\}\right) = \alpha \), we can extend the map \( \iota \circ D \circ \iota^{-1} \) to a homeomorphism \( D_{\alpha, \iota} \co S \to S \) by setting the map to be equal to the identity outside of \( \iota(A) \).
	The isotopy class of \( D_{\alpha,\iota} \) only depends on the isotopy class of \( \alpha \) and the orientation of the image of \( \iota \).
	If we let \( a \in \mathcal C_0(S) \) denote the isotopy class of \( \alpha \), the isotopy class of \( D_{\alpha,\iota} \) is a \emph{Dehn twist about \( a \)} and is denoted \( T_a \in \mcg(S) \).
	If \( S \) is oriented, then we naturally choose the orientation of the image of \( \iota \) to agree with that of \( S \); in this case, we can label \( T_a \) the \emph{left Dehn twist about \( a \)}.

	The following lemma is a direct consequence of \cite[Fact~3.6]{FarbPrimer}.

	\begin{lemma}
		\label{lem:twist conjugates}
		Let \( S \) be a surface.
		If \( a \in \mathcal C_0(S) \) and \( f \in \mcg(S) \), then \( f(a) = a \) if and only if \( fT_af^{-1} \in \{T_a, T_a^{-1}\} \).
		\qed
	\end{lemma}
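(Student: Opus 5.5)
The claim is that for \( a \in \mathcal C_0(S) \) and \( f \in \mcg(S) \), we have \( f(a)=a \) if and only if \( fT_af^{-1} \in \{T_a,T_a^{-1}\} \). I would reduce it to two standard facts. The first is the conjugation formula \( fT_af^{-1} = T_{f(a)}^{\pm 1} \). The second is that Dehn twists about distinct isotopy classes of essential curves are distinct, and are distinct from each other's inverses; that is, \( T_b^{\epsilon} = T_a^{\delta} \) with \( \epsilon,\delta\in\{\pm1\} \) forces \( b=a \).

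\textbf{Conjugation formula.} Choose a representative \( \alpha \) of \( a \), an annular embedding \( \iota\co A\to S \) around \( \alpha \), and a homeomorphism \( \phi \) representing \( f \). Then
\[ \phi\circ D_{\alpha,\iota}\circ\phi^{-1} = D_{\phi(\alpha),\,\phi\circ\iota}, \]
because both maps equal \( (\phi\circ\iota)\circ D\circ(\phi\circ\iota)^{-1} \) on \( \phi(\iota(A)) \) and the identity elsewhere. The isotopy class of \( D_{\phi(\alpha),\phi\circ\iota} \) is a Dehn twist about \( f(a) \). The annulus orientation determines whether it is \( T_{f(a)} \) or \( T_{f(a)}^{-1} \), since reversing the orientation of the annulus replaces \( D \) by a map isotopic rel boundary to \( D^{-1} \). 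This gives \( fT_af^{-1}\in\{T_{f(a)},T_{f(a)}^{-1}\} \). The forward direction follows at once: if \( f(a)=a \), then \( fT_af^{-1}\in\{T_a,T_a^{-1}\} \).

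\textbf{Converse.} Suppose \( fT_af^{-1}\in\{T_a^{\pm1}\} \). Then \( T_{b}^{\pm1}=T_a^{\pm1} \) with \( b=f(a) \), and we must show \( b=a \). Suppose instead that \( b\neq a \). Since \( a \) is essential and two-sided, there is a curve \( c\in\mathcal C_0(S) \) with \( i(c,a)>0 \) and \( i(c,b)=0 \); I treat the existence of such a \( c \) as the main obstacle.

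Granting \( c \), the twist \( T_b \) fixes \( c \). On the other hand, \( T_a^{\pm1}(c)\neq c \), because of the intersection-number formula \( i(T_a^k(c),c)=|k|\,i(a,c)^2>0 \) for \( k\neq 0 \). This formula, proved for finite-type surfaces in the Primer, extends to infinite-type \( S \) by working in a finite-type essential subsurface containing \( a\cup b\cup c \). Hence \( T_b^{\pm1}\neq T_a^{\pm1} \), a contradiction, and so \( f(a)=a \).

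\textbf{Constructing \( c \).} This step handles the case analysis needed to find \( c \).
\begin{itemize}
\item If \( i(a,b)>0 \), take \( c=b \), provided \( b \) is itself two-sided and essential, which it is.
\item If \( i(a,b)=0 \) and \( a\neq b \), realize \( a \) and \( b \) disjointly. Essentiality then gives a curve \( c \) in a component of \( S\ssm b \) crossing \( a \) essentially. The required fact is that every essential curve has a curve crossing it that lies in the complement of a disjoint non-isotopic essential curve. I would verify this in a compact subsurface containing \( a\cup b \), where it is standard surface topology, essentially \cite[Fact~3.6]{FarbPrimer}. The only care needed is in low-complexity pieces such as pairs of pants and one-sided curves, which the essential two-sided hypothesis rules out.
\end{itemize}
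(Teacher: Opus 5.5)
Your route is the standard one that the paper's citation of \cite[Fact~3.6]{FarbPrimer} stands in for. It has two ingredients: the conjugation formula \(fT_af^{-1}=T_{f(a)}^{\pm1}\), which you verify correctly and which gives the forward direction, and the fact that \(T_b^{\pm1}=T_a^{\pm1}\) forces \(b=a\), detected by a curve \(c\) meeting \(a\) but not \(b\). For orientable \(S\) this is fine.

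The gap is the step that concludes \(T_a^{\pm1}(c)\neq c\) from \(i(T_a^k(c),c)=|k|\,i(a,c)^2\). That formula is proved only for orientable surfaces, but the paper's surfaces may be non-orientable; that is why \(\mathcal C_0(S)\) insists on two-sided curves. Your closing claim, that the essential two-sided hypothesis disposes of the low-complexity cases, is exactly where the argument slips. A two-sided curve \(a\) bounding a M\"obius band \(M\) is essential in the paper's sense, since the closure of that complementary piece is \(M\), not a disk or once-punctured disk. Yet \(T_a\) is trivial. To see this, write \(M=\mathbb R\times[-1,1]/\bigl((x,y)\sim(x+1,-y)\bigr)\) and choose a continuous \(\phi\) on \([0,1]\) equal to \(1\) near \(0\) and \(0\) near \(1\). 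The maps \((x,y)\mapsto(x+2t\phi(|y|),y)\), \(t\in[0,1]\), are well defined and fix \(\partial M\). They start at the identity and end at a single Dehn twist about a curve parallel to \(\partial M\), because translation by \(2\) is the identity of \(M\). So \(i(T_a(c),c)=0\) for every \(c\), and no contradiction arises.

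This cannot be patched within your argument, because for such \(a\) the statement itself fails. We have \(fT_af^{-1}=1=T_a\) for every \(f\). On the other hand, if \(S\) has infinite type and contains disjoint M\"obius bands \(M\) and \(M'\), a homeomorphism exchanging them sends \(a=\partial M\) to \(\partial M'\neq a\). The lemma therefore needs an extra hypothesis, for example that \(S\) is orientable or that \(a\) bounds no M\"obius band. Under the first, your proof is complete. Under the second, you would still need non-orientable versions of the Primer's twist facts rather than the Primer itself. The paper's one-line justification has the same blind spot. So does its use of this lemma in the proof of \Cref{thm:mcg}, where the centralizer of \(T_a\) is identified, up to index two, with \(U_a\).
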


	\begin{remark}
		\label{rem:twist conjugates}
		The careful reader will observe that \cite[Fact~3.6]{FarbPrimer} appears to imply that \( fT_af^{-1} = T_a \) under the hypotheses of \Cref{lem:twist conjugates}; however, \cite{FarbPrimer} is only considering orientation-preserving mapping classes of orientable surfaces, which accounts for the discrepancy.
		In particular, if \( f \) reverses the orientation of an annular neighborhood of \( a \), then \( f T_a f^{-1} = T_a^{-1} \).
	\end{remark}

	If \( G \) is a closed subgroup of \( \mcg(S) \), we call the associated subspace topology on \( G \) the \emph{compact-open topology}.
	We can now readily establish our main theorem.

	\begin{theorem}
	\label{thm:mcg}
		Let \( S \) be a surface, and let \( G \) be a closed subgroup of \( \mcg(S) \) containing all Dehn twists.
		The compact-open topology on \( G \) is the unique topology on \( G \) making it a Polish group.
	\end{theorem}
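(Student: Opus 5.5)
Our plan is to apply \Cref{lem:subbasis} to the Polish group \( (G,\tau) \), where \( \tau \) is the compact-open topology. The finite-type case is immediate: \( \mcg(S) \) is countable, so \( G \) is countable, and a countable Polish group is discrete by the Baire category theorem. Hence we may assume \( S \) has infinite type. First, \( (G,\tau) \) is Polish, since it is a closed subgroup of the Polish group \( \mcg(S) \). By \Cref{lem:permutation basis}, the sets \( U_c \cap G \), for \( c \in \mathcal C_0(S) \), form a neighborhood subbasis of the identity in \( G \). It therefore suffices to show that each \( G_c := U_c \cap G = \{ f \in G : f(c) = c\} \) is algebraically Borel in \( G \).

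Fix \( c \in \mathcal C_0(S) \). By hypothesis \( T_c \in G \), so \Cref{lem:twist conjugates} gives
\[ G_c = \{ f \in G : fT_cf^{-1} = T_c\} \cup \{ f \in G : fT_cf^{-1} = T_c^{-1}\}. \]
The first set is the centralizer \( C_G(T_c) \), which is algebraically closed by \Cref{lem:centralizers}.

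The second set is either empty or a coset \( hC_G(T_c) \) for any single \( h \) in it. Indeed, if \( hT_ch^{-1} = T_c^{-1} \) and \( fT_cf^{-1} = T_c^{-1} \), then \( h^{-1}f \) commutes with \( T_c \). Left translation by \( h \) is a homeomorphism for every Hausdorff group topology on \( G \), so \( hC_G(T_c) \) is closed in every such topology. Thus \( G_c \) is a union of at most two algebraically closed sets, hence algebraically closed, and in particular algebraically Borel. Alternatively, one can see \( G_c \) as the stabilizer of the finite set \( \{T_c, T_c^{-1}\} \) under the continuous conjugation action, which is closed for the same reason as in \Cref{lem:centralizers}.

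\Cref{lem:subbasis} then gives that the compact-open topology is the unique Polish group topology on \( G \). The only point needing care is the orientation issue recorded in \Cref{rem:twist conjugates}: \( G_c \) is not literally a centralizer, and the coset argument above handles this. We do not expect any real obstacle, since the substantive inputs, namely the Alexander method subbasis and the twist-conjugation fact, are already available as \Cref{lem:permutation basis} and \Cref{lem:twist conjugates}.
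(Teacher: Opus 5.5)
Your proposal is correct and follows essentially the same route as the paper. You reduce to infinite type, use the subbasis \( \{U_c \cap G\} \) from \Cref{lem:permutation basis}, and show each stabilizer is algebraically closed as the centralizer of \( T_c \) together with at most one translate of it, via \Cref{lem:twist conjugates} and \Cref{lem:centralizers}. Your explicit use of \( T_c \in G \) to work with \( C_G(T_c) \), so that closedness is checked in group topologies on \( G \) rather than on \( \mcg(S) \), is a point the paper leaves implicit, and it is exactly where the hypothesis that \( G \) contains all Dehn twists enters.
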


	\begin{proof}
		As countable groups have a unique Polish group structure, we may assume that \( S \) is of infinite type.
		Let \( a \in \mathcal C_0(S) \).
		It follows from \Cref{lem:twist conjugates} and \Cref{rem:twist conjugates} that the centralizer of \( T_a \) is either equal to or index two in \( U_a \), the stabilizer of \( a \).
		By \Cref{lem:centralizers}, the centralizer of \( T_a \) is algebraically closed, and hence so is any translate of the centralizer; in particular, \( U_a \) is algebraically closed, as it is either equal to the centralizer or the union of the centralizer and a translate.

		Now, by \Cref{lem:permutation basis}, a subbasis for the identity in \( G \) is given by \( \{ G \cap U_a : a \in \mathcal C_0(S)\}  \), implying \( G \)---with the compact-open topology---admits a neighborhood subbasis for the identity consisting of algebraically closed sets.
		Therefore, by \Cref{lem:subbasis}, if \( (G, \tau) \) is a Polish group, then \( \tau \) must be the compact-open topology.
	\end{proof}


\section{Locally finite graphs}\label{sec:graphs}
	
	Let \( \Gamma \) be a locally finite connected graph.
	Recall that the \emph{rank} of a free group is the cardinality of a minimal generating set. 
	The fundamental group of \( \Gamma \) is a free group of at most countable rank, allowing us to define the \emph{rank} of \( \Gamma \) to be the rank of \( \pi_1(\Gamma) \). 
	We say that \( \Gamma \) is of \emph{finite type} if its rank is finite and it has finitely many ends; otherwise, it is of \emph{infinite type}.

	A map \( f \co X \to Y \) between topological spaces \( X \) and \( Y \) is a \emph{proper homotopy equivalence} if it is proper and there exists a proper map \( g \co Y \to X \) such that \( g\circ f \) and \( f \circ g \) are properly homotopic to the identity. 
	There is a classification of locally finite graphs up to proper homotopy equivalence in terms of their ranks, the homeomorphism type of their ends, and data regarding which ends are limits of loops \cite{AyalaProper}.
	We forgo the full statement here as it will not play a major role in our discussion.

	The \emph{mapping class group} of \( \Gamma \), denoted \( \mcg(\Gamma) \), is the group of proper homotopy classes of proper homotopy equivalences \( \Gamma \to \Gamma \). 
	Observe that if there is a proper homotopy equivalence between two locally finite graphs, then their mapping class groups are isomorphic. 
	
    In the case of a finite graph, that is, of \(\textrm{Out}(\mathbb{F}_n)\), work of Whitehead and Laudenbach shows that there is a closed orientable 3-manifold \(M_\Gamma\) and a surjective homomorphism
    \(\ \mcg(M_\Gamma) \to \mcg(\Gamma) \) with finite kernel; see \cite{BrendleMapping}.
    The following constructions were extended to the infinite-type setting by Udall \cite{UdallSphere}.

	\begin{construction}\label{construction 1}
		For each vertex \( v \) of \( \Gamma \), let \( B_v \) be a closed 3-ball, and for each edge \( e \) in \( \Gamma \), let \( D_e \) be a copy of the \emph{1-handle} \( [0,1] \times D^2 \).
		Fix orientations on each \(B_v\) and \(D_e\). If the edge \( e \) connects vertices \( v \) and \( w \), then identify \( \{ 0 \} \times D^2 \) with a disk in \( \partial B_v \) and identify \( \{ 1\}\times D^2 \) with a disk in \( \partial B_w \) via orientation-reversing homeomorphisms.
		The result is an orientable handlebody, which we denote by \( H_\Gamma \). 
		As the graph is locally finite, the disks chosen for the gluings do not affect the topology of the resulting 3-manifold so long as they are disjoint; therefore, \( H_\Gamma \) is well-defined up to homeomorphism. 
		Now, let \( M_\Gamma \) be the result of doubling \( H_\Gamma \) along its boundary. 
	\end{construction}

	\begin{construction}\label{construction 2}
		The second construction relies on the introduction of building blocks.
		Let \( n, s \in \mathbb N \cup \{0\} \).
		Let \( M_{n,s} \) denote the manifold obtained by taking the connected sum of \( \mathbb S^3 \) with \( n \) copies of \( \mathbb S^2 \times \mathbb S^1 \) and removing \( s \) pairwise-disjoint open balls.  
		For a vertex \( v \) in \( \Gamma \), let \( n_v \) be the number of edges of \( \Gamma \) with both endpoints being \( v \), and let \( s_v \) be the number of edges of \( \Gamma \) that have \( v \) as exactly one of its endpoints.
		Let \( M_v \) be a copy of \( M_{n_v,s_v} \), which we call the \emph{piece} of \( v \) (following \cite{UdallSphere}) and label the boundary components of \( M_v \) by their corresponding edges.
		We now define \( M_\Gamma \) to be \( \bigsqcup M_v /\sim \), where \( \sim \) is generated by identifying boundary components with the same edge labels via orientation-reversing homeomorphisms. 
	\end{construction}

	Both constructions yield, up to homeomorphism, the same 3-manifold \( M_\Gamma \), which is orientable and without boundary.
	Moreover, the homeomorphism type of \( M_\Gamma \) only depends on the proper homotopy type of \( \Gamma \), which we state below.

	\begin{proposition}[{\cite[Proposition~3.5]{UdallSphere}}] 
	\label{prop:classification of 3-manifold}
		Let \( \Gamma \) and \( \Gamma' \) be locally finite connected  graphs.
		There exists a proper homotopy equivalence between \( \Gamma \) and \( \Gamma' \) if and only if \( M_\Gamma \) and \( M_\Gamma' \) are homeomorphic. 
		\qed
	\end{proposition}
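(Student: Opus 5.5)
Since the statement is an if-and-only-if, I would prove the two directions separately, using the classification of locally finite graphs up to proper homotopy equivalence \cite{AyalaProper}. That classification says a connected locally finite graph \( \Gamma \) is determined up to proper homotopy by three invariants: its rank, its end space \( E(\Gamma) \), and the closed subset \( E_\ell(\Gamma) \subseteq E(\Gamma) \) of ends accumulated by loops (all up to homeomorphism of pairs). The plan is to show that \( M_\Gamma \) determines exactly these invariants and is in turn determined by them.

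\textbf{Homeomorphic manifolds give properly homotopy equivalent graphs.} I would work with \Cref{construction 1} and recover each invariant from \( M_\Gamma \).
\begin{enumerate}[(i)]
\item \emph{Rank.} The double \( M_\Gamma \) retracts onto \( H_\Gamma \) by folding, and \( H_\Gamma \) deformation retracts onto \( \Gamma \). Hence \( \pi_1(M_\Gamma) \cong \pi_1(\Gamma) \), and the rank is read off as the rank of this free group.
\item \emph{Ends.} The retraction \( H_\Gamma \to \Gamma \) is proper, and the folding map \( M_\Gamma \to H_\Gamma \) is proper. Taking a compact exhaustion of \( \Gamma \) by finite subgraphs and thickening and doubling it gives an exhaustion of \( M_\Gamma \). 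This identifies \( E(M_\Gamma) \) with \( E(\Gamma) \) homeomorphically.
\item \emph{Ends accumulated by loops.} An end of \( \Gamma \) is accumulated by loops exactly when every neighborhood of it has nonzero rank. This in turn holds exactly when every neighborhood of the corresponding end of \( M_\Gamma \) contains a nonseparating embedded \( 2 \)-sphere, i.e. an \( \mathbb S^2\times\mathbb S^1 \) summand. So \( E_\ell(\Gamma) \) is a topological invariant of \( M_\Gamma \).
\end{enumerate}
A homeomorphism \( M_\Gamma \to M_{\Gamma'} \) therefore matches all three invariants, and \cite{AyalaProper} yields a proper homotopy equivalence \( \Gamma \simeq \Gamma' \).

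\textbf{Properly homotopy equivalent graphs give homeomorphic manifolds.} By the same classification, it suffices to show that \( M_\Gamma \) depends only on the triple \( (\mathrm{rank}, E, E_\ell) \). Here I would use \Cref{construction 2}. The key local observation is that contracting a non-loop edge \( e \) joining \( v \neq w \) does not change \( M_\Gamma \). Indeed, gluing \( M_v \) and \( M_w \) along the sphere labelled \( e \) is a connected sum, and this gives a copy of \( M_{n_v+n_w,\, s_v+s_w-2} \). Likewise, subdividing edges and sliding loops across vertices leave \( M_\Gamma \) unchanged.

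\textbf{Main obstacle: the infinite-type case.} Proper homotopy equivalences are not generated by finitely many such moves, so I would instead run a back-and-forth argument in the style of Richards' classification of surfaces, comparing \( \Gamma \) and \( \Gamma' \) with the same invariants.
\begin{enumerate}[(i)]
\item Exhaust both graphs by finite connected subgraphs whose complementary components induce matching clopen partitions of the end spaces. These partitions should respect \( E_\ell \) and have matching ranks, which is arranged by enlarging the subgraphs to absorb loops as needed.
\item The corresponding compact submanifolds of \( M_\Gamma \) and \( M_{\Gamma'} \) are then homeomorphic copies of some \( M_{n,s} \), with boundary spheres matched according to the end partition. This follows from the finite case, where pieces are classified by \( (n,s) \) via the edge-contraction observation.
\item Extend the homeomorphisms inductively across the successive cobordisms between stages, which are again of the form \( M_{n,s} \) with prescribed boundary matching. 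Taking the union gives a homeomorphism \( M_\Gamma \to M_{\Gamma'} \).
\end{enumerate}
The delicate point is choosing the exhaustions so that at each stage the complementary regions correspond both in their end sets and in whether they carry infinite rank. This requires compatible clopen decompositions of \( (E, E_\ell) \) and \( (E', E'_\ell) \) under the given homeomorphism of pairs.
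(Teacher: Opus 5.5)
The paper gives no argument of its own here; the statement is quoted from Udall, so your outline has to stand on its own. Its architecture is reasonable, but the second direction has a real gap in steps (i)--(ii): the Richards-style induction cannot be run with exhaustions by finite connected subgraphs. The number of boundary spheres of the union of vertex pieces \(M(K)\) facing a complementary component equals the number of edges attaching that component to \(K\), and the invariants \((\mathrm{rank},E,E_\ell)\) do not control this number.

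Concretely, let \(\Gamma\) be the one-ended ladder and \(\Gamma'\) a ray with a loop at every vertex. Both have infinite rank and a single end, accumulated by loops, so they are properly homotopy equivalent. The ladder has no bridges, so for every finite connected \(K\subset\Gamma\) the unbounded component of \(\Gamma\ssm K\) is attached by at least two edges. For every finite connected \(K'\subset\Gamma'\) it is attached by exactly one. A homeomorphism \(M_\Gamma\to M_{\Gamma'}\) carrying \(M(K)\) onto \(M(K')\) must carry the noncompact complementary region onto the noncompact one, hence at least two frontier spheres onto one, which is impossible. So the matching asserted in (ii) fails for every choice of subgraphs. Similarly, no finite connected subgraph of a line has both ends in a single complementary component, although a ball in \(M_\Gamma\cong\mathbb S^2\times\mathbb R\) does. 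So subgraphs cannot realize arbitrary clopen partitions either.

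The missing idea is the 3-dimensional analogue of Richards' normalization, where every complementary component of the compact piece has a single boundary curve. You need to leave the class of subgraphs and work with compact submanifolds of \(M_\Gamma\) directly.
\begin{itemize}
\item Adding tubes inside a complementary region to join the \(k\) spheres along which it meets \(M(K)\) turns \(M_{n,s}\) into \(M_{n+k-1,\,s-k+1}\).
\item Drilling a tube along an unknotted arc through the compact piece merges two complementary regions into one, turning \(M_{n,s}\) into \(M_{n,s-1}\).
\end{itemize}
Together these give compact pieces \(N\cong M_{n,s}\) that contain any prescribed compact set and realize any prescribed clopen partition of the ends. Each complementary region of such an \(N\) is noncompact and bounded by one separating sphere. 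The bookkeeping is then on the end set and the finite or infinite genus of each region. In (iii), each cobordism meets the previous stage in a single sphere, so the extension only needs three facts: \(M_{n,s}\) realizes every permutation of its boundary spheres, it admits orientation-reversing homeomorphisms, and orientation-preserving homeomorphisms of \(\mathbb S^2\) are isotopic to the identity.

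The first direction needs two smaller repairs.
\begin{itemize}
\item A retraction \(M_\Gamma\to H_\Gamma\) only makes \(\pi_1(H_\Gamma)\) a retract of \(\pi_1(M_\Gamma)\). The isomorphism needs van Kampen together with surjectivity of \(\pi_1(\partial H_\Gamma)\to\pi_1(H_\Gamma)\), or you can read the rank off Construction~2.
\item In your characterization of \(E_\ell\), ``nonseparating'' must mean nonseparating in \(M_\Gamma\). A sphere in a region attached along several spheres can separate the region without separating \(M_\Gamma\). With that reading the characterization is correct.
\end{itemize}
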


	An embedded 2-sphere \( S \) in \( M_\Gamma \) is \emph{essential} if the closure of any component of \( M_\Gamma \ssm S \) is neither homeomorphic to a 3-ball nor a 3-ball with a point deleted from its interior. 
	We will simply refer to essential 2-spheres as \emph{spheres.} 

	The \emph{sphere graph} \( \mathcal S(\Gamma) \) associated to \( \Gamma \) is the graph whose vertex set consists of the isotopy classes of spheres in \( M_\Gamma \) with an edge between two vertices if they admit disjoint representative spheres. 
	Observe that every homeomorphism \( M_\Gamma \to M_\Gamma \) induces a graph automorphism of \( \mathcal S(\Gamma) \), i.e., a bijection on the vertex set of \( \mathcal S(\Gamma) \) that sends adjacent vertices to adjacent vertices and non-adjacent vertices to non-adjacent vertices. 

	\begin{theorem}[{\cite[Theorem~1.1]{HillAutomorphisms}}]
	\label{thm:automorphism}
		Let \( \Gamma \) be an infinite-type locally finite connected  graph.
		\begin{enumerate}[(i)]
			\item Every automorphism of \( \mathcal S(\Gamma) \) is induced by a homeomorphism \( M_\Gamma \to M_\Gamma \).
			\item The group of graph automorphisms of \( \mathcal S(\Gamma) \) is isomorphic to \( \mcg(\Gamma) \). 
				\qed
		\end{enumerate}
	\end{theorem}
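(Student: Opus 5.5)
The statement is \cite[Theorem~1.1]{HillAutomorphisms}, which the paper quotes without proof. If I had to prove it, I would prove (ii) from (i), and prove (i) by exhausting \( M_\Gamma \) by finite-type pieces and invoking the known rigidity of sphere graphs in finite rank.

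\emph{From (i) to (ii).} The infinite-type extension of Laudenbach's theorem (Udall \cite{UdallSphere}, following \cite{BrendleMapping}) gives a surjection \( \mcg(M_\Gamma) \to \mcg(\Gamma) \) whose kernel is the subgroup generated by sphere twists. Since \( \mcg(M_\Gamma) \) acts on \( \mathcal S(\Gamma) \), it suffices to show three things.
\begin{enumerate}[(a)]
\item The action is surjective onto \( \Aut(\mathcal S(\Gamma)) \). This is exactly (i).
\item Sphere twists act trivially on \( \mathcal S(\Gamma) \). This is Laudenbach's observation that a twist about a sphere fixes the isotopy class, indeed the homotopy class, of every embedded sphere.
\item A homeomorphism fixing every vertex of \( \mathcal S(\Gamma) \) is a product of sphere twists.
\end{enumerate}
For (c), I would use an Alexander-method argument. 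Choose a locally finite sphere system cutting \( M_\Gamma \) into pieces \( M_v \) as in \Cref{construction 2}. A homeomorphism fixing all these spheres up to isotopy can be isotoped to preserve every piece and each boundary sphere. It also fixes spheres inside each piece. The finite-type result, that the kernel of the action on the sphere complex is generated by twists, then applies piece by piece. Because the pieces are locally finite and the twists have disjoint supports, the infinite product of these twists converges to a homeomorphism. The outcome is \( \Aut(\mathcal S(\Gamma)) \cong \mcg(M_\Gamma)/\mathrm{Twist} \cong \mcg(\Gamma) \).

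\emph{Proof of (i).} I would first give combinatorial characterizations inside \( \mathcal S(\Gamma) \):
\begin{itemize}
\item maximal cliques correspond to decompositions of \( M_\Gamma \) into \( 3 \)-holed spheres;
\item separating and nonseparating spheres are distinguished by properties of their links;
\item given a finite sphere system \( \Sigma \), two spheres disjoint from \( \Sigma \) lie in the same complementary component exactly when they are connected in the link of \( \Sigma \) through spheres that do not fill across \( \Sigma \). Equivalently, the link of \( \Sigma \) is a join whose factors are the sphere graphs of the complementary components.
\end{itemize}
Next, fix an automorphism \( \varphi \) and an exhaustion \( K_1 \subset K_2 \subset \cdots \) of \( M_\Gamma \) by compact submanifolds bounded by finite sphere systems \( \Sigma_k \). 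By the join characterization, \( \varphi \) carries the sphere graph of \( K_k \) to the sphere graph of the corresponding complementary component of \( \varphi(\Sigma_k) \). This component has the same topological type, which is detected by the combinatorics of the join factor, for instance its rank and number of boundary spheres. The finite-type rigidity theorem (Aramayona--Souto for the sphere graph of \( \#^n \mathbb S^2\times\mathbb S^1 \) with boundary) then gives homeomorphisms \( h_k \) with \( h_k(K_k)=K'_k \) inducing \( \varphi \) on \( K_k \).

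\emph{Main obstacle.} The hardest step is making the \( h_k \) compatible so that they converge. The map \( h_{k+1}|_{K_k} \) and the map \( h_k \) induce the same map on the sphere graph of \( K_k \). By the finite-type kernel statement, they therefore differ by a product of twists supported in \( K_k \), possibly composed with a boundary-permuting symmetry that acts trivially. I would first try to correct \( h_{k+1} \) by this difference, composing with twists supported near \( K_k \), so that \( h_{k+1} \) agrees with \( h_k \) on \( K_k \) up to isotopy rel boundary. The direct limit is then a homeomorphism of \( M_\Gamma \) inducing \( \varphi \) on every sphere, since each sphere lies in some \( K_k \).

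Low-complexity pieces, where the finite-rank theorem has exceptions, are a second obstacle. I would avoid them by starting the exhaustion at a sufficiently large \( K_1 \), which is possible because \( \Gamma \) is of infinite type.
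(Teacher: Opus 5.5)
The paper does not prove this statement; it imports it from \cite[Theorem~1.1]{HillAutomorphisms}, so your outline has to stand on its own. Its shape is the natural one: exhaust \( M_\Gamma \) by compact pieces, apply finite-type rigidity on each, glue, then get (ii) from (i) and the Laudenbach--Udall sequence. However, its main finite-type input is not available as you state it. Aramayona--Souto \cite{AramayonaAutomorphisms} prove rigidity only for the sphere (free splitting) graph of the \emph{closed} manifold \( \#^n\mathbb S^2\times\mathbb S^1 \). Every compact \( K_k \) in an exhaustion of an infinite-type \( M_\Gamma \) has \( s\ge 1 \) boundary spheres, so you need rigidity for \( M_{n,s} \) with \( s\ge 1 \). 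You also need it in a form that sends each boundary sphere \( \sigma \) of \( K_k \) to \( \vp(\sigma) \), because your gluing requires this and boundary spheres are not vertices of \( \mathcal S(K_k) \). The paper remarks right after the statement that this finite-type-with-ends case is part of Hill's own work. So you cannot quote it as a black box; it is a substantial part of what has to be proved.

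Second, even with compatible \( h_k \), the direct limit is only an embedding of \( M_\Gamma \) onto \( \bigcup_k K_k' \). Nothing in your argument forces the \( K_k' \), which \( \vp \) determines only up to isotopy, to exhaust \( M_\Gamma \), and this genuinely fails for bad choices. Suppose \( \Gamma \) has an isolated end with a product neighborhood \( \mathbb S^2\times[0,\infty)\subset M_\Gamma \). Let \( h \) be the identity off this neighborhood and a homeomorphism of it onto \( \mathbb S^2\times[0,1) \). Every sphere can be isotoped off the neighborhood, so \( h \) fixes every vertex of \( \mathcal S(\Gamma) \). Then the maps \( h_k=h|_{K_k}\colon K_k\to h(K_k) \) meet all your requirements for \( \vp=\mathrm{id} \): they are nested, strictly compatible, and induce \( \vp \). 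Yet their limit \( h \) is not surjective. You therefore need two further ingredients:
\begin{enumerate}
\item a separate argument that representatives of \( \vp(\Sigma_k) \) can be chosen to leave every compact set;
\item a simultaneous-isotopy (infinite Alexander method) statement for locally finite families of spheres, which is also what you use without proof in (c) when you isotope a homeomorphism to preserve every piece.
\end{enumerate}
Finally, work in the orientation-preserving subgroup throughout. The doubling reflection of \( M_\Gamma=H_\Gamma\cup_\partial H_\Gamma \) acts trivially on \( \pi_1 \) and on \( \mathcal S(\Gamma) \), but it is not a product of sphere twists, so (c) is false for the full mapping class group. In the gluing step, \( h_k^{-1}h_{k+1}|_{K_k} \) could then be a reflection of \( K_k \), which no product of twists supported in \( K_k \) can correct. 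Choosing every \( h_k \) orientation-preserving avoids this.
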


	Outside of a few exceptional cases, the theorem above is also true for finite-type graphs, with the finite case considered in \cite{AramayonaAutomorphisms} and the general case in \cite{HillAutomorphisms}. 
	Exactly how one sees the action of \( \mcg(\Gamma) \) on \( \mathcal S(\Gamma) \) is not relevant to our discussion, but we direct the reader to \cite{BrendleMapping,UdallSphere} for details.

	As with surfaces, we define the \emph{mapping class group} of a 3-manifold \( M \), denoted \( \mcg(M) \), to be the group of isotopy classes of homeomorphisms \( M \to M \); if \( M \) has non-empty boundary, we require all homeomorphisms and isotopies to fix \( \partial M \) pointwise.  
	Now, the action of \( \Homeo(M_\Gamma) \) on \( \mathcal S(\Gamma) \) factors through its mapping class group; therefore, \Cref{thm:automorphism} yields an epimorphism \( \mcg(M_\Gamma) \to \autg \). 
	We let \( K_\Gamma \) denote the kernel of this epimorphism. 
	The structure of \( K_\Gamma \) is well understood; in particular, it is an abelian group of exponent two whose nontrivial elements are known as sphere twists (see \cite{BrendleMapping,UdallSphere}).
	With the exception of a brief mention in the proof of \Cref{lem:complements}, this twist structure will not play a serious role in our arguments. 

	\Cref{thm:automorphism} allows us to change our focus from \( \mcg(\Gamma) \) to \( \autg \), whose elements we can view as mapping classes of \( M_\Gamma \). 
	Our goal is now to prove that the group of graph automorphisms of \( \mathcal S(\Gamma) \), denoted \( \Aut(\mathcal S(\Gamma)) \), has a unique Polish topology, namely the permutation topology.
	The \emph{permutation topology} on \( \Aut(\mathcal S(\Gamma)) \) is determined by setting the stabilizers of vertices to be a neighborhood subbasis of the identity.
	As \( \mathcal S(\Gamma) \) is a countable structure, \( \Aut(\mathcal S(\Gamma)) \) equipped with the permutation topology is a Polish group (see \cite[Example 7, p. 59-60]{KechrisClassical}).

	We now proceed to establish that the permutation topology on \( \autg \) is the unique Polish group structure on \( \autg \). 
	To do so, we want to decompose the stabilizer of a vertex into the product of two subgroups, each of which can be realized as the intersection of centralizers, allowing us to appeal to the lemmas in \Cref{sec:algebraic sets}.

	We begin with several definitions.
	Let \( M \) be a \( \pi_1 \)-injective submanifold of \( M_\Gamma \). 
	The \emph{genus} of \( M \), denoted \( g(M) \), is defined to be equal to \( \mathrm{rank}(\pi_1(M)) \) as a free group. If the genus of \( M \) is infinite, set \( g(M) = \infty \).
	We also set \( e(M) \) to be equal to the number of ends of \( M \); if \( M \) has infinitely many ends, set \( e(M) = \infty \).
	The \emph{complexity} of \( M \) is the pair \( (g(M),e(M)) \); the complexity is \emph{finite} if both \( g(M) \) and \( e(M) \) are finite. 
	Given a sphere \( S \) in \( M_\Gamma \), we say that \( S \) is \emph{separating} if \( M_\Gamma \ssm S \) is disconnected; otherwise, it is \emph{non-separating}.
	For a separating sphere \( S \), letting \( M_1 \) and \( M_2 \) denote the two components of \( M_\Gamma \ssm S \), the \emph{complexity} of \( S \) is the pair \( (g(S),e(S)) \in \mathbb N \cup \{0,\infty\} \) defined by \[ (g(S),e(S)) = \min\{(g(M_1), e(M_1)), (g(M_2),e(M_2))\}, \] where we order pairs of integers lexicographically.
	As above, the complexity is \emph{finite} if \( g(S) \) and \( e(S) \) are finite. 	

	Over the next several lemmas, we record several properties of spheres we will need in our arguments. 
	In what follows, given \( s \in \bn\cup\{0\} \), we let \( M_{\Gamma,s} \) denote the 3-manifold obtained by removing \( s \) pairwise-disjoint open balls from \( M_\Gamma \). 

	\begin{lemma} 
	\label{lem:complements}
		Let \( \Gamma \) be a locally finite connected  graph. 
		Suppose \( S \) is a separating sphere in \( M_\Gamma \) and let \( M_1 \) and \( M_2 \) denote the closures of the complementary components of \( S \). 
		\begin{enumerate}[(i)]
			\item For each \( i \), there exists a locally finite connected  graph \( \Gamma_i \) such that \( M_i \) is homeomorphic to \( M_{\Gamma_i,1} \). 
			\item The inclusion \( M_i \hookrightarrow M_\Gamma \) induces a monomorphism \( \mcg(M_i) \to \mcg(M_\Gamma) \).
		\end{enumerate}		
	\end{lemma}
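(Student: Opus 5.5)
For part (i), I would use the second description of \( M_\Gamma \) (\Cref{construction 2}) together with the handlebody picture from \Cref{construction 1}. First, isotope \( S \) into a standard position relative to the decomposition of \( M_\Gamma \) into pieces \( M_v \). Concretely, consider the separating sphere system given by the boundary spheres of the pieces and apply a Hatcher-style normal-form or innermost-disk surgery argument so that \( S \) meets only finitely many pieces, since \( S \) is compact. The union of those finitely many pieces, together with a collar, is a compact submanifold \( N \cong M_{n,s} \) containing \( S \). Inside \( N \), the sphere \( S \) separates \( N \) into two compact pieces. By the prime decomposition and Kneser--Milnor (or Laudenbach's description of spheres in \( \#_n \mathbb S^2\times\mathbb S^1 \) with balls removed), each piece is homeomorphic to some \( M_{n',s'} \); this uses that \( \mathbb S^2 \times \mathbb S^1 \) summands and boundary spheres are the only possible prime factors. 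Then \( M_i \) is obtained from its compact piece by gluing, along the boundary spheres other than \( S \), the complementary components of \( N \) in \( M_\Gamma \). Each such component is itself of the form \( M_{\Gamma',1} \), namely the union of the pieces of a subgraph cut along an edge sphere. Now build \( \Gamma_i \) by taking a rose-with-\( s' \)-legs graph for the compact part, attaching the corresponding subgraphs \( \Gamma' \), and marking one extra vertex for the boundary sphere \( S \). Reading \Cref{construction 2} backwards, \( M_i \cong M_{\Gamma_i,1} \), and \( \Gamma_i \) is locally finite and connected.

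For part (ii), I would argue as follows. Extend a homeomorphism of \( M_i \) fixing \( \partial M_i = S \) by the identity on the other side; this gives a well-defined homomorphism \( \mcg(M_i)\to\mcg(M_\Gamma) \). For injectivity, suppose \( f \) is a homeomorphism of \( M_i \) rel \( S \) whose extension is isotopic to the identity in \( M_\Gamma \). The isotopy may move \( S \), so I would use the standard fact (Laudenbach in the compact case, and Udall's extension in the infinite-type case) that homeomorphisms acting trivially up to isotopy can be isotoped to preserve a given sphere. This reduces the question to a homeomorphism of \( M_i \) rel \( S \) that is isotopic to the identity rel \( S \) inside \( M_\Gamma \). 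The remaining discrepancy is a loop in the space of embeddings of \( S \), and the possible obstructions are sphere twists about \( S \) and rotations of the sphere. The twist along \( S \) is trivial in \( \mcg(M_\Gamma) \) when \( S \) bounds \( M_{\Gamma_j,1} \) with \( \Gamma_j \) nontrivial only in special ways, so this requires care. I would instead argue via the action on \( \pi_1 \) and on sphere systems. An element of the kernel acts trivially on \( \pi_1(M_i)\) and on all spheres in \( M_i \), so it lies in the twist subgroup of \( \mcg(M_i) \). Twists about spheres in \( M_i \) that are nontrivial in \( \mcg(M_i) \) remain nontrivial in \( \mcg(M_\Gamma) \), as one checks by an action on a sphere system or on \( \pi_2 \)/framing data, following \cite{BrendleMapping}.

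I expect the main obstacle to be this twist bookkeeping in (ii): identifying exactly which sphere twists of \( M_i \) rel boundary survive in \( M_\Gamma \), including the twist about \( \partial M_i \) itself. Part (i) is mostly routine once \( S \) is confined to a compact submanifold.
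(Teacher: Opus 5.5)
Your part (i) is essentially the paper's argument. You confine \(S\) to a finite union \(N\) of vertex pieces; compactness of \(S\) already gives this, so no normal-form isotopy is needed. You then note that the two pieces of \(N\) cut along \(S\) are copies of some \(M_{n,s}\), and reassemble with the complementary components of \(N\). Those components may meet \(N\) in several spheres rather than one, but this is harmless.

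Part (ii), however, has a genuine gap at its central step. You assert that an element \(\varphi\) of the kernel of \(\mcg(M_i)\to\mcg(M_\Gamma)\) acts trivially on all spheres in \(M_i\). Triviality of the extension only tells you that \(\varphi(\Sigma)\) is isotopic to \(\Sigma\) \emph{in \(M_\Gamma\)}. To conclude \(\varphi(\Sigma)=\Sigma\) as isotopy classes in \(M_i\), you need that the inclusion induces an injection on isotopy classes of essential spheres: essential spheres of \(M_i\) remain essential in \(M_\Gamma\), and spheres that are non-isotopic in \(M_i\) remain non-isotopic in \(M_\Gamma\).

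This is exactly the input the paper uses, citing \cite[Lemma~2.10 and Proposition~2.9]{HillAutomorphisms}. With it, a nontrivial \(\varphi\) either moves some sphere of \(M_i\), and then so does its extension, or it fixes every sphere and is a product of sphere twists about essential spheres. Your proposal never isolates this fact.

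The \(\pi_1\) claim you place beside it cannot replace it. When \(M_{3-i}\) has genus zero, triviality in \(\mcg(M_\Gamma)\) only shows that \(\varphi\) acts on \(\pi_1(M_i)\) by an inner automorphism. Maps such as pushing \(S\) around a loop in \(M_i\) are invisible to \(\mathrm{Out}(\pi_1(M_\Gamma))\) and must be ruled out using spheres. Moreover, \(\pi_1\) sees nothing at all of the end-permuting mapping classes of a genus-zero \(M_i\).

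On the twist step, your concern is fair. The paper itself only argues that twists about essential spheres of \(M_i\) are twists about essential spheres of \(M_\Gamma\). But your suggested check via ``an action on a sphere system'' cannot work, since sphere twists fix every sphere. Only framing or \(\pi_2\)-type data, as in Laudenbach and \cite{BrendleMapping}, can detect them.
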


	\begin{proof}
		(i) By construction, there is a finite union of vertex pieces as in \Cref{construction 2}, call it \( N \), containing \( S \).
		Each complementary component \( C \) of \( N \) is therefore also a union of vertex pieces, so has closure homeomorphic to \( M_{\Gamma_C} \) for some locally finite connected  graph \( \Gamma_C \); in fact, \( \Gamma_C \) can be taken to be an induced subgraph of \( \Gamma \).
		Now the closure of each complementary component of \( N \ssm S \) also has this structure, as each such component is homeomorphic to a copy of \( M_{n,s} \) for some \( n,s \in \mathbb N\cup\{0\} \). 
		The desired \( \Gamma_i \) can now be built from these graphs.

		(ii) By extending homeomorphisms of $M_i$ to $M_\Gamma$ by the identity, we have a homomorphism $\theta\co\mcg(M_i) \to \mcg(M_\Gamma)$. 
        Now suppose $\phi \in \mcg(M_i)$ is a non-identity element. 
        If $\phi$ fixes every essential sphere in $M_i$, then $\phi$ is a product of sphere twists about essential spheres. 
        By \cite[Lemma 2.10]{HillAutomorphisms}, every essential sphere in $M_i$ is essential in $M_\Gamma$. 
        Therefore, $\theta(\phi)$ is still a non-trivial product of sphere twists. On the other hand, suppose there is an essential sphere $S_1$ so that $S_2 = \phi(S_1)$ is non-isotopic to $S_1$. Then, after identifying $S_1$ and $S_2$ with their images in $M_\Gamma$, $S_2 = \theta(\phi)(S_1)$ is non-isotopic to $S_1$ by \cite[Proposition 2.9]{HillAutomorphisms}. 
        Therefore $\theta(\phi)$ is not the identity. We conclude that $\theta$ is injective.
	\end{proof}

	For a separating sphere \( S \) in \( M_\Gamma \), we call the graphs \( \Gamma_1 \) and \( \Gamma_2 \) given by \Cref{lem:complements}(i) the \emph{graph components of \( S \)}.
	Note that the graph components of \( S \) are well-defined up to proper homotopy equivalence. 

	\begin{lemma} 
	\label{lem:change of coordinates}
	Let \( \Gamma \) be a locally finite connected  graph, and let \( s \in \mathbb N \cup \{0\} \).
		\begin{enumerate}[(i)]
			\item If \( S_1 \) and \( S_2 \) are non-separating spheres in \( M_{\Gamma,s} \), then there exists a homeomorphism \( f \co M_{\Gamma,s} \to M_{\Gamma,s} \) such that \( f(S_1) = S_2 \). 
				Moreover, \( f \) can be taken to fix each end of \( M \). 

			\item Let \( S_1 \) and \( S_2 \) be separating spheres in \( M_{\Gamma,s} \).
				Let \( \Gamma_i \) and \( \Gamma_i' \) be the graph components of \( S_i \). 
				If, up to relabeling, \( \Gamma_1 \) is proper homotopy equivalent to \( \Gamma_2 \) and \( \Gamma_1' \) is proper homotopy equivalent to \( \Gamma _2' \), then there exists a homeomorphism \( f \co M_{\Gamma,s} \to M_{\Gamma,s} \) such that \( f(S_1) = S_2 \). 
		\end{enumerate}
	\end{lemma}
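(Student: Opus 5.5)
We prove both parts by cutting along the spheres, identifying the pieces with standard models, and gluing homeomorphisms of the pieces back together. The tools are \Cref{lem:complements}(i) and the classification of the manifolds \( M_{\Gamma} \) up to proper homotopy type (\Cref{prop:classification of 3-manifold}).

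\emph{Part (ii).} Cut \( M_{\Gamma,s} \) along \( S_1 \) and along \( S_2 \). The \( s \) boundary spheres of \( M_{\Gamma,s} \) are distributed among the two sides of each \( S_i \), so each closed complementary piece is of the form \( M_{\Gamma_i,1+t} \) for some \( t \leq s \). The argument is the one in \Cref{lem:complements}(i), applied to \( M_{\Gamma,s} \) viewed as \( M_\Gamma \) with balls removed; a boundary sphere is just a removed ball. By hypothesis and \Cref{prop:classification of 3-manifold}, the sides containing \( \Gamma_1 \) and \( \Gamma_2 \) have homeomorphic interiors after capping off. We also need the same number of boundary spheres to lie on corresponding sides; I will read this into the hypothesis, or handle it by building the boundary components into the graph components. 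Any homeomorphism of \( M_{\Gamma_i} \) can be isotoped to carry one finite collection of disjoint balls to another, by a homogeneity argument for finitely many balls in a connected 3-manifold. So we obtain homeomorphisms \( f_1 \co M_1 \to M_1' \) and \( f_2 \co M_2 \to M_2' \) between corresponding sides that send \( S_1 \) to \( S_2 \) and match the boundary spheres. Adjust \( f_2 \) on a collar of \( S_1 \) so that it agrees with \( f_1 \) there. This uses that every homeomorphism of \( \mathbb S^2 \) is isotopic to the identity or to a reflection, together with orientation bookkeeping: we may precompose with an orientation-reversing homeomorphism of one piece if needed, since each \( M_{n,s} \) admits one restricting to a reflection on a chosen boundary sphere. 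The glued map is the desired \( f \).

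\emph{Part (i).} Cut \( M_{\Gamma,s} \) along the non-separating sphere \( S_i \). The result is connected, with two new boundary spheres \( S_i^{\pm} \). The key step is to show that the cut manifold is homeomorphic to \( M_{\Gamma',s+2} \), where \( \Gamma' \) is a graph whose rank is one less than that of \( \Gamma \) (or still infinite) and whose end space, including which ends are accumulated by loops, is unchanged. Because the end data and the rank determine the proper homotopy type, the two cut manifolds are then homeomorphic, and we can choose the homeomorphism to carry \( S_1^{\pm} \) to \( S_2^{\pm} \) with matching orientations. To do this, place \( S_i \) in a finite union \( N \) of vertex pieces, as in the proof of \Cref{lem:complements}; then \( N \) cut along \( S_i \) is again some \( M_{n,t} \) by the classification of compact such manifolds (Milnor prime decomposition), and the complement of \( N \) is unchanged. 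Regluing gives a homeomorphism of \( M_{\Gamma,s} \) taking \( S_1 \) to \( S_2 \). For the "moreover'' clause, we get end-fixing as follows. The homeomorphism can be chosen to be the identity outside a large compact set: enlarge \( N \) to contain both spheres, then apply the compact classification inside \( N \), keeping \( \partial N \) fixed. This works provided \( N \) is chosen so that both \( S_1 \) and \( S_2 \) are non-separating in \( N \), which we can arrange by enlarging \( N \) to include a loop-path crossing each sphere.

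The main obstacle is the compact case: showing that inside \( M_{n,t} \), two non-separating spheres, or two separating spheres with the same distribution of genus and boundary, differ by a homeomorphism fixing the boundary. I would derive this from uniqueness of prime decomposition together with the standard change-of-coordinates for sphere systems in \( \#_n \mathbb S^2\times \mathbb S^1 \), as in Laudenbach, and cite that literature rather than reprove it.
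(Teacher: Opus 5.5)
Your argument is correct, and its core is the paper's: cut along the sphere(s), identify the complementary pieces via \Cref{prop:classification of 3-manifold}, and reglue using that orientation-preserving homeomorphisms of \(\mathbb S^2\) are isotopic to the identity. For (i), this comes after noting that cutting along a non-separating sphere lowers the rank by one (or leaves it infinite) and leaves the ends, and the ends accumulated by loops, unchanged.

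The difference is in what gets tracked. The paper's proof works only in \(M_\Gamma\), ignoring the \(s\) boundary spheres and orientations. It also gives no argument for the ``moreover'' clause, although a homeomorphism supplied by the classification has no a priori control over the ends.

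Your caveat in (ii) is warranted. If the graph components are read off from the capped-off sides, the statement can fail for \(s \geq 1\). For example, take \(M_{\Gamma,1}\) with \(M_\Gamma \cong \#_5\,\mathbb S^2\times\mathbb S^1\), and two spheres each splitting off a \(\#_2\) summand, with the boundary sphere on opposite sides. No homeomorphism relates them, since it must preserve \(\partial M_{\Gamma,1}\).

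Your treatment of end-fixing is a genuinely different route. You choose a compact connected union \(N \cong M_{n,t}\) of vertex pieces in which both spheres are non-separating. By Kneser--Milnor uniqueness, \(N\) cut along \(S_1\) and \(N\) cut along \(S_2\) are both \(M_{n-1,t+2}\). Homogeneity of balls plus Kneser's theorem on \(\mathbb S^2\) then yields a homeomorphism fixing \(\partial N\) pointwise, hence fixing every end and every boundary sphere. This uses only compact theory and in fact proves all of (i) on its own, so your first global argument for (i) is redundant. The paper's route buys brevity and a uniform treatment of (i) and (ii); yours delivers the end control the statement actually asserts.

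Two small fixes, neither a gap. First, in your initial sketch of (i), \(S_i\) may separate \(N\), so \(N\) cut along \(S_i\) can be disconnected; this is moot once \(N\) is chosen as in your last step. Second, the orientation-reversing homeomorphisms you need for the noncompact pieces \(M_{\Gamma_i,1+t}\) do not come from the compact \(M_{n,s}\). They come from the doubling involution of Construction~\ref{construction 1}, with the removed balls centered on \(\partial H_{\Gamma_i}\).
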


	\begin{proof}
		We first consider (ii).
		Let \( M_i \) and \( M_i' \) be the closures of the complementary components of \( S_i \).
		By definition, \( M_i \) (resp., \( M_i' \)) is homeomorphic to \( M_{\Gamma_i,1} \) (resp., \( M_{\Gamma_i',1} \)). 
		Under the assumptions, \Cref{prop:classification of 3-manifold} implies that there exist homeomorphisms \( f_1 \co M_1 \to M_2 \) and \( f_2 \co M_1' \to M_2' \). 
		Now, it follows from the Alexander trick that \( \mathrm{Homeo(\mathbb S^2)} \) has two connected components, which allows us to glue \( f_1 \) and \( f_2 \) together to construct \( f \co M_\Gamma \to M_\Gamma \), which satisfies \( f(S_1) = S_2 \).  

		Let us turn to (i). 
		Arguing as in \Cref{lem:complements}, the closure of the complementary component of \( S_i \) can be written \( M_{\Gamma_i,2} \) for some locally finite connected  graph \( \Gamma_i \). 
		It is clear that \( \Gamma_1 \) and \( \Gamma_2 \) have the same rank and homeomorphic end spaces, and therefore they are proper homotopy equivalent. 
		One can now follow the argument above for the separating case.
	\end{proof}

	With these basic lemmas at hand, we move to establishing a desirable neighborhood subbasis for the identity in the permutation topology. 
	We require sets in the subbasis to be expressible in terms of centralizers, allowing us to access the tools from \Cref{sec:algebraic sets}. 
	To do so, we will need to restrict ourselves to a subbasis consisting of stabilizers of separating spheres that have high enough complexity.
	In particular, for a separating sphere \( S \), we will require that \( (g(S), e(S)) > (1,1) \). 
	As will be exhibited in the arguments below, this requirement comes about from needing every sphere contained in a complementary component \( M \) of \( M_\Gamma \ssm S \) to have infinite orbit under \( \mcg(M) \) (viewed as a subgroup of \( \mcg(M_\Gamma) \)).
    
    We say a separating sphere is \emph{basic} if \( (g(S),e(S)) > (1,1) \).
	In the next lemma, we record the fact that spheres in a complementary component of a basic separating sphere have infinite orbit in the stabilizer of the basic sphere. 
	This can be deduced using sphere pushes, or their graph equivalent word maps (introduced in \cite[Section~3.3]{DomatCoarse}).

	\begin{lemma}\label{lem:infinite orbit}
		Let \( \Gamma \) be a locally finite connected  graph.
		If \( (\mathrm{rank}(\Gamma), e(\Gamma)) > (1,1) \), then every sphere in \( M_\Gamma \) has infinite \( \mcg(M_\Gamma) \) orbit.
		\qed
	\end{lemma}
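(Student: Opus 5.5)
Given a sphere \( S \) in \( M_\Gamma \), the plan is to find a single mapping class \( \phi \in \mcg(M_\Gamma) \) such that the spheres \( \phi^n(S) \), \( n \in \mathbb Z \), are pairwise non-isotopic. Equivalently, in \( \mathcal S(\Gamma) \), the vertex \( S \) should have unbounded orbit under \( \langle \phi \rangle \). The natural source of such \( \phi \) is a \emph{sphere push} (the analogue of a point push or handle slide), or on the graph side a word map in the sense of \cite[Section~3.3]{DomatCoarse}. The complexity hypothesis \( (\mathrm{rank}(\Gamma), e(\Gamma)) > (1,1) \) is what supplies the room for such a push. It means either \( \mathrm{rank}(\Gamma) \geq 2 \), or \( \mathrm{rank}(\Gamma) = 1 \) and \( e(\Gamma) \geq 2 \).

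The first step is to reduce to a normal form for \( S \). By \Cref{lem:change of coordinates}, \( S \) is determined up to homeomorphism by whether it separates and, if it does, by the proper homotopy types of its graph components. Having an infinite orbit is invariant under conjugating by a homeomorphism, so it suffices to check one representative of each type. I would choose these representatives inside a finite union of vertex pieces \( N \), as in the proof of \Cref{lem:complements}, so that \( S \) meets a small explicit core of \( M_\Gamma \).

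The second step is to produce an element that moves \( S \) infinitely. Since \( S \) is essential, I would find a non-separating sphere \( T \) together with a loop \( \gamma \) representing a primitive element of \( \pi_1(M_\Gamma) \cong \pi_1(\Gamma) \) such that \( \gamma \) meets \( S \) essentially. The candidate \( \phi \) is the push of \( T \) around \( \gamma \); on the graph side this is the word map that drags the edge dual to \( T \) along the word \( \gamma \). I would then compare representatives of \( \phi^n(S) \) through their intersection patterns with a fixed sphere system, or dually through the splitting of \( \pi_1 \) that \( S \) determines. Distinct \( n \) should give distinct splittings: the new splittings are twisted by conjugation by \( \gamma^n \), and in a free group of rank at least \( 2 \), or in the presence of a second end, this conjugation is not absorbed by the vertex groups.

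The main obstacle is ensuring that a suitable \( \gamma \) and \( T \) always exist, since the rank-one, one-ended case is precisely where this fails (see \Cref{rem: failure}). I would split into cases.
\begin{enumerate}[(a)]
\item \textbf{\( S \) non-separating.} If \( \mathrm{rank}(\Gamma) \geq 2 \), push \( S \) along a loop through a second handle; this is the usual Dehn-twist-like map, with orbit growing as in \( \mathrm{Out}(\mathbb F_n) \). If the rank is \( 1 \) and there are at least two ends, push \( S \) around the core of its own handle relative to a separating sphere that cuts off an end. I expect this to give infinitely many distinct spheres, because the handle can be slid arbitrarily far toward the end.
\item \textbf{\( S \) separating.} One side of \( S \) carries genus or an end that the complexity bound lets us exploit. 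If a loop crosses \( S \), pushing a handle from one side around that loop changes the partition of \( \pi_1 \). If no loop crosses \( S \) usefully, both sides contain handles, and a handle slide of a handle on one side over an end-going ray on the other side should do the job.
\end{enumerate}
In each case I would verify the non-isotopy of the spheres \( \phi^n(S) \) by a homological or algebraic invariant: the conjugacy class of the free factor cut off by \( S \), or the ends that \( S \) separates. I expect this verification to be the delicate part of the argument.
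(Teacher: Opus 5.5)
\textbf{Verdict: genuine gap.} Your overall strategy is what the paper's one-line justification points to: iterate one compactly supported push or word map \(\varphi\) and show the spheres \(\varphi^n(S)\) are pairwise non-isotopic. Your plan goes through for non-separating spheres when \(\mathrm{rank}(\Gamma)\geq 2\), and for separating spheres with genus on both sides.

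\textbf{Where it fails.} Take a separating sphere \(S\) one of whose complementary components, say \(B\), has genus zero. Then \(B\) is simply connected and, since \(S\) is essential, contains at least two ends. Such spheres exist whenever \(\mathrm{rank}(\Gamma)\) is finite and \(e(\Gamma)\geq 2\). When \(\mathrm{rank}(\Gamma)=1\), every separating sphere is of this type, so this is exactly where the hypothesis \(e(\Gamma)\geq 2\) has to be used. Three things go wrong here.
\begin{enumerate}[(1)]
\item No loop meets \(S\) essentially: arcs of a loop inside the simply connected \(B\) can be homotoped into \(S\) rel endpoints. So your choice of \(\gamma\) cannot be made.
\item Your fallback in (b) has the dichotomy backwards. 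When no loop crosses \(S\) usefully, it is not true that both sides contain handles; one side contains none.
\item More seriously, every invariant you propose is constant on \(\langle\varphi\rangle\)-orbits for any compactly supported \(\varphi\). The free splitting determined by \(\varphi^n(S)\) is always the trivial one \(\pi_1(M_\Gamma)\ast 1\). The partition of ends is preserved because \(\varphi\) fixes every end. The class \([S]\in H_2(M_\Gamma)\) is fixed because \(S\) is homologous to a union of spheres in \(B\) lying outside the support of \(\varphi\).
\end{enumerate}
So even with the right map you could not certify \(\varphi^n(S)\neq S\). Neither \(\pi_1\) nor homology can see the ``presence of a second end'' you invoke.

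\textbf{What is missing.} You need a move and an invariant that see the ends of \(B\). Let \(\varphi\) push an end \(p\) of \(B\) around an essential loop \(\gamma\) on the other side; on the graph side this is the word map dragging a ray to \(p\) around \(\gamma\). Fix a proper line \(\ell\subset B\) from another end \(q\) of \(B\) to \(p\).
\begin{enumerate}[(1)]
\item Suppose \(\varphi^n(S)=S\). Then \(\varphi^n(B)\) is isotopic to \(B\), since it is the side containing \(p\).
\item Hence \(\varphi^n(\ell)\) is properly homotopic to a line in \(B\), and so to \(\ell\), because \(B\) and suitable neighborhoods of \(p\) and \(q\) in \(B\) are simply connected.
\item But \(\varphi^n(\ell)\) agrees with \(\ell\) outside a compact set and differs from it by inserting \(\gamma^n\).
\item Truncate the proper homotopy far out and use simple connectivity of those end neighborhoods. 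This gives \(\gamma^n=1\) in the free group \(\pi_1(M_\Gamma)\), a contradiction.
\end{enumerate}
Intersection numbers with a fixed sphere system would also work as the invariant.

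\textbf{Two smaller points.}
\begin{enumerate}[(1)]
\item Your ``equivalently, unbounded orbit'' is false for infinite-type \(\Gamma\), since then \(\mathcal S(\Gamma)\) has diameter at most \(2\). You only need infinitely many distinct vertices anyway.
\item In (a), pushing \(S\) around the core of its own handle preserves \(S\), and ``sliding the handle toward an end'' is not an infinite-order mapping class. In rank one the maps you want are again end pushes: drag a proper nonempty clopen set of ends around the core. For those maps, homology does distinguish the iterates of a non-separating sphere.
\end{enumerate}
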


	\begin{remark}\label{rem: failure}
		If \( \mathrm{rank}(\Gamma) = 0 \), then it is possible for \( M_\Gamma \) to have spheres that are fixed by every element of \( \mcg(M_\Gamma) \).
		For example, if the end space of \( \Gamma \) can be decomposed as the disjoint union of \( C \) and \( F \), where \( C \) is a Cantor set and \( F \) is a finite set of cardinality at least two, then there is a unique sphere in \( M_\Gamma \) that separates \( C \) and \( F \) (where we are identifying the end space of \( \Gamma \) and \( M_\Gamma \)), and hence it must be fixed by every element of \( \mcg(M_\Gamma) \). 
		This is an issue for rank one graphs, because any separating sphere \( S \) must cutoff a genus zero submanifold, creating the possibility that spheres in this submanifold fail to have infinite orbit in the stabilizer of \( S \).
		This is the reason we must require the rank of \( \Gamma \) to be at least two for our arguments. 
	\end{remark}

	Let \( S \) be a  sphere in \( M_\Gamma \).
	Viewing \( S \) as a vertex in \( \mathcal S(\Gamma) \), let \( U_S \) denote its stabilizer in \( \autg \).
	The goal of the following lemma, \Cref{lem:new subbasis}, is to establish that the set 
	\[ \{ U_S: S \text{ a basic separating sphere} \} \]
	is a neighborhood subbasis of the identity. 
		
	\begin{lemma} 
	\label{lem:new subbasis}
		Let \( \Gamma \) be an infinite-type locally finite connected  graph of rank at least two.
		If \( S \) is either a non-separating sphere or a non-basic separating sphere, then there exist basic separating spheres \( S_1 \) and \( S_2 \) such that \( U_{S_1} \cap U_{S_2} \subseteq U_S \). 
	\end{lemma}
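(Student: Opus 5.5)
The approach is to trap \(S\) in a region \(R\) of \(M_\Gamma\) cut out by two disjoint basic separating spheres \(S_1\) and \(S_2\). We choose \(R\) so small and so rigid that any automorphism fixing both \(S_1\) and \(S_2\) must also fix \(S\). Concretely, the plan is to find \(S_1, S_2\), disjoint from each other and from \(S\), such that:
\begin{enumerate}[(a)]
\item the closure of the component \(R\) of \(M_\Gamma \ssm (S_1 \cup S_2)\) containing \(S\) is homeomorphic to a compact piece \(M_{n,2}\) with small \(n\), for instance \(n \le 1\);
\item \(S\) is determined up to isotopy by \(S_1\) and \(S_2\) inside \(R\).
\end{enumerate}
If \(f \in U_{S_1} \cap U_{S_2}\), then \(f\) either preserves \(R\) or swaps it with the other side. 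In the first case, \(f(S)\) is a sphere in \(R\) of the same type as \(S\), and (b) forces \(f(S)=S\). We may also arrange that the two complementary pieces of \(S_1\cup S_2\) other than \(R\) are not homeomorphic to \(R\), or that \(f\) cannot exchange \(S_1\) and \(S_2\). This last arrangement can be obtained by choosing \(S_1\) and \(S_2\) with distinct graph components, which \Cref{lem:change of coordinates} and the freedom in choosing them permit.

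For the construction, we use \Cref{construction 2} together with \Cref{lem:change of coordinates}. That lemma lets us place \(S\) in a standard position up to homeomorphism, since stabilizers transform by conjugation and basicness is preserved by homeomorphisms. We then split into cases.

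\textbf{Non-separating \(S\).} Take \(R \cong M_{1,2}\), that is, \(\mathbb S^2\times\mathbb S^1\) with two balls removed, with \(S = \mathbb S^2\times\{pt\}\). The complement of \(R\) has two sides, \(A_1\) bounded by \(S_1\) and \(A_2\) bounded by \(S_2\). Since \(\Gamma\) has rank at least two and is of infinite type, the remaining genus and ends can be distributed so that \(A_1\) and \(A_2\) each have complexity \(>(1,1)\), or complexity \(>(1,1)\) once we account for which side is smaller. In the infinite-rank or infinitely-many-ends case, we put infinite complexity on both sides where possible. Each \(S_i\) is basic provided its smaller side \(\min\{(g,e)\}\) exceeds \((1,1)\). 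Since \(R\) itself contributes genus \(1\), the side of \(S_1\) containing \(R\) has genus at least \(1\) plus \(g(A_2)\), so it suffices to make each \(A_i\) large. Inside \(M_{1,2}\), the non-separating sphere is unique up to isotopy rel boundary; this is a standard fact about the sphere graph of \(M_{1,2}\), up to the sphere twist kernel \(K_\Gamma\), which acts trivially on \(\mathcal S(\Gamma)\).

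\textbf{Non-basic separating \(S\).} Here \(S\) cuts off a side \(M_1\) of complexity \(\le (1,1)\). Then \(M_1\) is one of a short list: a once-punctured ball with two ends, genus-one with at most one end, and so on. Take \(S_1\) to be a sphere enclosing \(M_1\) together with enough extra genus or ends to become basic, and \(S_2\) a parallel-ish sphere enclosing \(M_1\) with different extra material, so that \(S\) is the unique sphere of its complexity type separating the relevant pieces. More precisely, we can choose \(S_1\) and \(S_2\) whose small sides \(X_1\) and \(X_2\) satisfy \(X_1 \cap X_2 = M_1\) up to isotopy. Then \(S\) is recovered as the boundary of the intersection, and any \(f\) fixing \(S_1\) and \(S_2\) fixes \(S\). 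The rank-at-least-two and infinite-type hypotheses guarantee that there is enough complexity outside \(M_1\) to enlarge it in two different directions while keeping both sides of each \(S_i\) of complexity \(>(1,1)\).

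The main obstacle is the second case: verifying rigorously that the sphere recovered from \(X_1\cap X_2\) is well defined on isotopy classes. This requires innermost-disc surgery or normal form arguments, in the spirit of \cite[Proposition~2.9]{HillAutomorphisms}. It also requires checking that enough room exists in borderline cases, such as rank exactly two with few ends, where \(\Gamma\) being infinite type must supply the extra ends.
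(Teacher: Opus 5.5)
\textbf{There is a genuine gap in the non-separating case.}

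Your claim (b) rests on the assertion that the non-separating sphere in \(M_{1,2}\) is unique up to isotopy rel boundary. This is false. Cutting \(M_{1,2}\) along a non-separating sphere gives \(M_{0,4}\), which produces two further non-separating spheres disjoint from it. As in the proof of \Cref{lem: m12}, the non-separating spheres of \(M_{1,2}\) form a bi-infinite line. Concretely, push one boundary sphere of \(R\) once around the \(\mathbb S^1\) factor. This homeomorphism is supported in \(R\), fixes \(S_1\) and \(S_2\), and moves \(S\) to another vertex of that line. It is not a sphere twist: distinct spheres of \(R\) stay distinct in \(M_\Gamma\), as in the proof of \Cref{lem:complements}. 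Hence \(U_{S_1}\cap U_{S_2}\not\subseteq U_S\).

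Choosing \(R\) differently does not help. Suppose \(S_1,S_2\) are disjoint and \(S\) is non-separating and disjoint from both. Then the complementary region containing \(S\) falls into one of two cases:
\begin{enumerate}[(1)]
\item It has two boundary spheres and positive genus. Then the same push, performed in an \(M_{1,2}\) around \(S\) and a loop dual to it, moves \(S\).
\item It is cut off by a single basic sphere. Then \Cref{lem:infinite orbit} moves \(S\) by mapping classes supported there.
\end{enumerate}
So trapping \(S\) between disjoint spheres cannot work for non-separating \(S\). The sphere twist kernel plays no role here.

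\textbf{The missing idea is to use intersecting basic spheres, which is what the paper does.}

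The paper places \(S\) in an \(M_{1,1}\) vertex piece, where the non-separating sphere really is unique. This piece is adjacent to an \(M_{0,4}\) piece \(P\). Using proper homotopy equivalences of \(\Gamma\), the paper makes two of the three spheres of \(P\) basic. These two spheres intersect, and \(P\) is a regular neighborhood of their union. Therefore \(U_{S_1}\cap U_{S_2}\) preserves \(P\) and its third sphere. With the complementary pieces suitably arranged, it also preserves each boundary sphere of \(P\), hence the \(M_{1,1}\) piece, hence \(S\). The boundary of that \(M_{1,1}\) is non-basic, which is exactly why it cannot serve as one of your trapping spheres.

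Your separating-case construction is in fact close to this mechanism. The requirement \(X_1\cap X_2=M_1\) forces \(S_1\) and \(S_2\) to intersect. For disjoint spheres the chosen sides are nested, disjoint, or cover \(M_\Gamma\), and in none of these cases is the intersection \(M_1\). So that case contradicts your stated framework, but it does match the paper. To finish it, you must still rule out \(f\in U_{S_1}\cap U_{S_2}\) carrying \(X_i\) to the opposite side of \(S_i\). Your remark about \(f\) exchanging \(S_1\) and \(S_2\) is moot, since such \(f\) fixes each \(S_i\). The isotopy issue you flag as the main obstacle is sidestepped by working inside a single \(M_{0,4}\) piece, whose three spheres are known explicitly.

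Once the separating case is done, the non-separating case follows. Take \(S'\) to be the boundary of an \(M_{1,1}\) containing \(S\). Then \(U_{S'}\subseteq U_S\).
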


	\begin{proof}
		\begin{figure}
			\centering
			\includegraphics[scale=0.9]{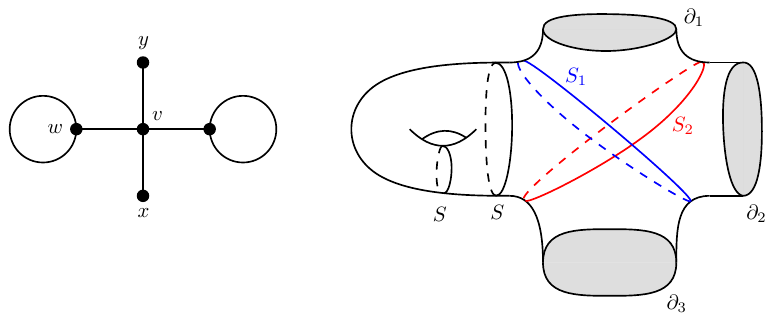}
			\caption{A neighborhood of the vertices \( v \) and \( w \) (left) and the associated handlebody \( H \) (right) from the proof of \Cref{lem:new subbasis}.
            The non-separating or non-basic separating spheres $S$ correspond to the loop and separating edges incident to $w$, respectively. The basic spheres $S_1$ and $S_2$ are pictured, but do not correspond to edges in the graph.}
			\label{fig:non-separating}
		\end{figure}

		Let us first consider the case in which \( S \) is either non-separating or has a complementary component homeomorphic to the interior of \( M_{1,1} \). 
    Up to proper homotopy equivalence of \( \Gamma \), this \( M_{1,1} \) is the vertex piece associated to a vertex \( w \in \Gamma \). 
    We may also assume that the unique vertex adjacent to \( w \) is \( v \), whose vertex piece is the interior of \( M_{0,4} \).
    The union of the pieces of \( v \) and \( w \) is the double of a handlebody \( H \) in \Cref{fig:non-separating}.

		In the piece of \( v \), which is homeomorphic to \( M_{0,4} \), there are exactly three spheres, call them \( S_1 \), \( S_2 \), and \( S_3 \) (two of these spheres are pictured in \Cref{fig:non-separating}). 
		Note that we can edit \( \Gamma \) with a proper homotopy equivalence to change the topology of the complementary components of the piece of \( v \).
		We claim that there is such an edit in which \( S_1 \) and \( S_2 \) are basic.

		We prove this by adding restrictions on the other vertices. 
		First, we may assume that the piece of the unlabeled vertex incident to \(v\) is also \( M_{1,1} \), since the rank of \( \Gamma \) is at least two. 
		There are two other vertices incident to \( v \), call them \( x \) and \( y \). 
		Because \( \Gamma \) is of infinite type, we may assume that \( x \) is a cut vertex whose complement not containing \(v\) is not compact. 
		If \( \Gamma \) has one end, then it has infinite rank, whence we may assume that the piece of \( y \) is also \( M_{1,1} \).
		Otherwise we may assume that \( y \) is also a cut vertex whose complement not containing \(v\) is not compact.
		From this it follows by inspection that \( S_1 \) and \( S_2 \) are basic.
		
		Observe that any element of \( U_{S_1} \cap U_{S_2} \) must preserve the piece of \( v \) (up to isotopy), as it is a regular neighborhood of \( S_1 \cup S_2 \).
		Moreover, once \( S_1 \) and \( S_2 \) are stabilized, \( S_3 \) must also be stabilized, as it is the only remaining sphere in the piece of \( v \). 
		It follows that the boundary components of the piece of \( v \) cannot be permuted; in particular, it must fix the piece of \( w \), and hence \( S \), since \( S \) is the unique sphere in \( M_\Gamma \) contained in the piece of \( w \) which is separating or non-separating, respectively.
		This establishes \( U_{S_1} \cap U_{S_2} \subset U_S \).

	\begin{figure}
		\centering
		\includegraphics[scale=0.9]{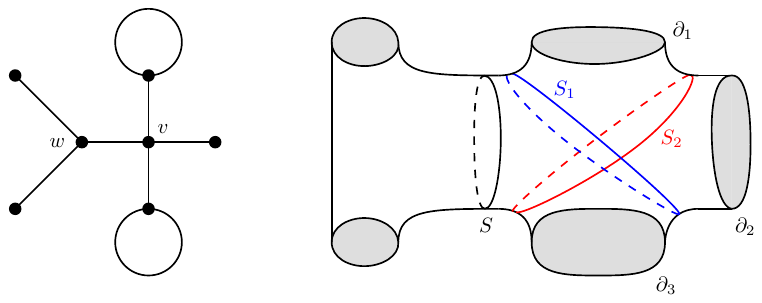}
		\caption{A neighborhood of the vertices \( v \) and \( w \) (left) and the associated handlebody \( H \) (right) from the proof of \Cref{lem:new subbasis}. In this case the non-basic separating sphere $S$ corresponds to the edge between vertices $v$ and $w$. The basic separating spheres $S_1$ and $S_2$ are pictured, but do not correspond to edges in the graph.}
		\label{fig:separating}
	\end{figure}

		For the case where \( S \) cuts off a genus zero submanifold, an analogous argument to the \( M_{1,1} \) case applies with \Cref{fig:non-separating} replaced by \Cref{fig:separating}.
		We leave the details to the reader.
	\end{proof}

	Given a homeomorphism \( f \co M \to M \), the \emph{support} of \( f \), denoted \( \supp(f) \), is the closure in \( M \) of the set \( \{ x \in M : f(x) \neq x \} \). 
	Given a subset \( X \) of \( M \) and a mapping class \( \vp \in \mcg(M) \), we say that \( \vp \) is supported in \( X \) if there exists a representative \( f \) of \( \vp \) whose support is contained in \( X \). 
	Similarly, we say \( \vp \in \autg \) is supported in \( X \) if there is a homeomorphism \( f \co M_\Gamma \to M_\Gamma \) inducing \( \vp \) on \( \mathcal S(\Gamma) \) and whose support is contained in \( X \).

	\begin{lemma}\label{lem:support of centralizers}
		Let \( \Gamma \) be a locally finite connected  graph. 
		Suppose \( S \) is a basic separating sphere in \( M_\Gamma \), and let \( M \) be the closure of a component of \( M_\Gamma \ssm S \).
		Let \( U \) consist of the elements of \( \autg \) supported in \( M \).
		If \( \varphi \in \autg \) commutes with each element of \( U \), then \( \varphi(S) = S \).
		Moreover, there exists \( f \co M_\Gamma \to M_\Gamma \) representing \( \varphi \) fixing \( S \) pointwise and satisfying \( f[M] = M \). 
	\end{lemma}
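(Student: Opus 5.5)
Suppose \(\varphi\) commutes with every element of \(U\). I would first show \(\varphi(S)=S\) by exploiting the fact that commuting elements permute each other's fixed sets in \(\mathcal S(\Gamma)\). For \(u \in U\) we have \(u\varphi = \varphi u\). So for any sphere \(R\) fixed by every element of \(U\), the sphere \(\varphi(R)\) is also fixed by every element of \(U\), since \(u(\varphi(R)) = \varphi(u(R)) = \varphi(R)\). The plan is therefore to identify the set \(\mathrm{Fix}(U)\) of vertices fixed by all of \(U\), and to show that \(S\) is distinguished inside it.

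\textbf{Step 1.} Every sphere contained in the complementary component \(M' = \overline{M_\Gamma\ssm M}\) is fixed by \(U\), as is \(S\) itself. Conversely, I claim no sphere meeting the interior of \(M\) essentially, that is, not isotopic into \(M'\), is fixed by \(U\).

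For a sphere \(R\) isotopic into \(M\) (and not equal to \(S\)), this follows from \Cref{lem:complements}: \(M\cong M_{\Gamma_1,1}\) and \(\mcg(M)\) embeds in \(\mcg(M_\Gamma)\). Since \(S\) is basic, the graph component \(\Gamma_1\) satisfies \((\mathrm{rank},e)>(1,1)\), once the boundary sphere is accounted for as an extra puncture. Then \Cref{lem:infinite orbit}, applied to the relevant capped-off manifold, gives \(R\) an infinite \(U\)-orbit. Some care is needed with passing between \(M_{\Gamma_1,1}\) and \(M_{\Gamma_1}\); I would handle this by filling the boundary with a ball and noting that spheres essential in \(M\) stay essential, by \cite[Lemma 2.10]{HillAutomorphisms}.

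For a sphere \(R\) that intersects \(S\) essentially, I would take a sphere \(R_0\subset M\) in minimal position with the intersection pattern of \(R\). Replacing \(R\) by \(u(R)\) for a suitable \(u\in U\), such as a push or word map moving the arcs of \(R\cap M\), changes the isotopy class of \(R\). This step is the main obstacle. I would try to argue via a surgery or normal form: if \(u(R)=R\) for all \(u \in U\), then \(R\cap M\) would be preserved up to isotopy by all of \(\mcg(M)\), contradicting the infinite orbits obtained above.

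\textbf{Step 2.} From Step 1, \(\mathrm{Fix}(U)\) is exactly \(S\) together with the spheres contained in \(M'\). Now \(\varphi\) preserves \(\mathrm{Fix}(U)\), and so does \(\varphi^{-1}\). Among the spheres in \(\mathrm{Fix}(U)\), \(S\) is characterized combinatorially as the unique one adjacent to every other element of \(\mathrm{Fix}(U)\), since all spheres in \(M'\) are disjoint from \(S\). No sphere in \(M'\) has this property, because \(M'\) also has complexity greater than \((1,1)\), so each sphere in \(M'\) is crossed by some other sphere in \(M'\). Graph automorphisms preserve adjacency, so \(\varphi(S)=S\).

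\textbf{Step 3.} By \Cref{thm:automorphism}(i), \(\varphi\) is induced by a homeomorphism \(f\), which we may isotope so that \(f(S)=S\). Either \(f[M]=M\) or \(f\) swaps the two sides. A swap would send spheres in \(M'\) into \(M\), but \(\varphi\) maps \(\mathrm{Fix}(U)\) to itself and these image spheres are not in \(\mathrm{Fix}(U)\), so \(f[M]=M\). Finally, \(f|_S\) is a homeomorphism of \(\mathbb S^2\), and it preserves orientation of \(S\) because \(f\) preserves the sides of \(S\). By the Alexander trick, as in \Cref{lem:change of coordinates}, we can isotope \(f\) in a collar of \(S\) to fix \(S\) pointwise. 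If \(f\) reverses the orientation of \(M_\Gamma\), this is still fine, since \(\mathrm{Homeo}(\mathbb S^2)\) has two components and we may compose with a reflection in the collar while keeping \(f\) on each side.
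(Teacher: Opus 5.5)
Your framework (compute the set $\mathrm{Fix}(U)$ of spheres fixed by all of $U$, then locate $S$ inside it) is a reasonable repackaging of the paper's argument. However, the step that carries the real content is left as a sketch, and the sketch does not yet give a contradiction. Ruling out a $U$-fixed sphere $R$ that crosses $S$ is exactly the paper's non-adjacent case; there $R=\varphi(S)$, which is $U$-fixed because $\varphi$ commutes with $U$. Knowing that each $u\in U$ preserves $R\cap M$ up to isotopy does not contradict \Cref{lem:infinite orbit}, since that lemma concerns orbits of \emph{spheres} in $M$, not of planar surfaces with boundary on $S$. The missing idea is to extract spheres in $M$ from $R$, which is what the paper does:
put $S$ and $R$ in minimal position, so $R\cap M$ is a finite union of planar surfaces with boundary circles on $S$;
cap each of these off with disks in $S$ and push into $M$;
discard the inessential spheres.
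This gives a finite set $P$ of spheres in $M$, nonempty because $R$ cannot be isotoped off $S$, and canonically determined by the pair $(S,R)$. Since $U$ fixes both $S$ and $R$, it permutes $P$. So the spheres of $P$ have finite $U$-orbits, contradicting \Cref{lem:infinite orbit}. Until this is supplied, Step~1, and hence the conclusion $\varphi(S)=S$, is not established.

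Two further points. First, in Step~2 you depart from the paper by characterizing $S$ as the element of $\mathrm{Fix}(U)$ adjacent to all others. This needs the claim that every sphere of $M'$ other than $S$ is crossed by another sphere of $M'$. That claim is true, but it requires a case analysis that your one-line complexity remark does not provide. You can bypass it with the paper's argument for the adjacent case, which fits your setup. If $\varphi(S)\neq S$ is disjoint from $S$, then $\varphi$ or $\varphi^{-1}$ carries some sphere $S'\neq S$ of $M$ to a sphere of $M'$. Choosing $\psi\in U$ with $\psi(S')\neq S'$ gives $\varphi(S')=\psi\varphi(S')=\varphi\psi(S')$, a contradiction.

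Second, in Step~3 your orientation remark is wrong. If $f$ reverses the orientation of $M_\Gamma$ and $f[M]=M$, then $f|_S$ reverses the orientation of $S$. No modification inside a collar of $S$ can make $f$ the identity on $S$ while leaving it unchanged on both sides, since that would be an isotopy of $\mathbb S^2$ from a reflection to the identity. Instead, compose $f$ with an orientation-reversing homeomorphism that acts trivially on $\mathcal S(\Gamma)$ and preserves $S$ and its sides. One such map is the doubling involution of $M_\Gamma$, after arranging via \Cref{lem:change of coordinates} that $S$ is the double of a disk in $H_\Gamma$. The paper is silent on this point. Also, capping $\partial M$ with a ball can make spheres of $M$ inessential; cap with a once-punctured ball instead, in line with your ``extra puncture.''
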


	\begin{proof}
		For the sake of arguing by contradiction, suppose that \( \varphi(S) \neq S \). 
		We consider two cases, depending on whether \( \varphi(S) \) and \( S \) are adjacent in the sphere complex or not. 

		First, suppose that \( \varphi(S) \) and \( S \) are adjacent, so up to isotopy, they are disjoint; in particular, \( \varphi(S) \) is either contained in \( M \) or \( M_\Gamma \ssm M \).
		In this case, \( \varphi \) must map a sphere \( S' \) contained in \( M \) into the complement of \( M \), or vice versa. 
		Start with the case where \( S' \) is in \( M \).
		By \Cref{lem:infinite orbit}, we can choose \( \psi \in U \) such that \( \psi(S') \neq S' \). 
		As \( \psi \in U \), it must be that \( \psi(\varphi(S')) = \varphi(S') \).
		But this yields
			\[
				\varphi(S') = \psi(\varphi(S')) = \varphi(\psi(S')),
			\]
		implying \( S' = \psi(S') \), a contradiction. 
		Repeating the argument with \( \varphi \) replaced by \( \varphi^{-1} \) handles the other case.

		Next, suppose that \( \varphi(S) \) and \( S \) are not adjacent, so that any two representatives intersect. 
		Let \( \psi \in U \). 
		Now, \( \psi(S) = S \) and \( \psi \) commutes with \( \varphi \), implying \( \psi \) fixes \( \varphi(S) \); indeed, 
		\( \psi(\varphi(S)) = \varphi(\psi(S)) = \varphi(S). \)
		Therefore, every element of \( U \) fixes \( \varphi(S) \).

		We may choose representatives of \( S \) and \( \varphi(S) \) in \( M_\Gamma \) such that the number of components in \( S \cap \varphi(S) \) is minimal over all representatives.
		Then the intersection of \( \varphi(S) \) and \( M \) is a union of planar surfaces \( \Sigma_1, \ldots, \Sigma_k \) such that each component of \( \partial \Sigma_i \) is a circle in \( S \). 
		Because \( S \) is separating, each \( \Sigma_i \) must have at least two boundary components, and therefore there is a unique collection of pairwise disjoint disks in \( S \) whose union with \( \Sigma_i \) is a sphere, which can be isotoped into \( M \).  
		Let \( P \) denote the collection of all such spheres (forgetting any non-essential spheres).  
		Then, as \( \varphi(S) \) has nontrivial intersection with \( S \), \( P \) is a non-empty finite set of spheres canonically associated to the pair \( S \) and \( \varphi(S) \).
		It follows that every element of \( U \) preserves the set of spheres \( P \).
		But this contradicts \Cref{lem:infinite orbit}, allowing us to conclude that \( \varphi(S) = S \).

		To finish, observe that our initial argument showing that \( \varphi(S) \) and \( S \) cannot be distinct and adjacent also implies that \( \varphi \) cannot move a sphere in \( M \) to its complement.
		It follows that the desired representative \( f \) exists. 
	\end{proof}

	To continue, we need a detailed understanding of non-separating spheres in \( M_{1,2} \), which is provided by the following lemma (see also \cite[Section~2.3]{UdallSphere}).	

	\begin{lemma}\label{lem: m12}
		Let \( S \) be a non-separating sphere in \( M_{1,2} \). 
		\begin{enumerate}[(1)]
			\item There are exactly three spheres disjoint from \( S \), one of which is separating.
			\item Let \( f \in \mcg(M_{1,2}) \) be a nontrivial mapping class such that \( f(S) = S \) and such that \( f \) stabilizes each component of \( \partial M_{1,2} \). 
				Then \( f \) transposes the non-separating spheres disjoint from \( S \) and \( f(S') \neq S' \) for every non-separating sphere \( S' \neq S \). 
				Moreover, if \( g \in \mcg(M_{1,2}) \) satisfies \( g(S) = S \), then either (i) \( g(S') = S' \) for every sphere \( S' \) or (ii) \( g(S') = f(S') \) for every sphere \( S' \). 
		\end{enumerate}
	\end{lemma}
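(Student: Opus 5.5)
Throughout, \( M_{1,2} \) is \( \mathbb S^2\times\mathbb S^1 \) with two open balls removed, and I would work in the model of \Cref{construction 1}: \( M_{1,2} \) is the double of a genus-one handlebody \( H \) with two marked boundary disks (the spots of the removed balls). The plan is to reduce everything to the simple combinatorics of \( S \) and its complement, cutting along \( S \) and then invoking \Cref{lem:change of coordinates} to normalize \( S \). Cutting \( M_{1,2} \) along a non-separating sphere \( S \) yields a copy of \( M_{0,4} \), namely \( \mathbb S^3 \) minus four balls, whose boundary consists of the two original boundary spheres \( \partial_1,\partial_2 \) together with two copies \( S^\pm \) of \( S \).

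For (1), I would use that any sphere disjoint from \( S \) lives in this \( M_{0,4} \), and that the essential spheres of \( M_{0,4} \) relative to \( M_{1,2} \) are exactly the three spheres separating the four boundary components into pairs (this is the same count as for \( v \)'s piece in the proof of \Cref{lem:new subbasis}). The three pairings give: \( \{\partial_1,\partial_2\}\mid\{S^+,S^-\} \), which is separating in \( M_{1,2} \), cutting off \( M_{0,3} \) (a pair of pants containing both boundary components) from the genus part; and \( \{\partial_1,S^+\}\mid\{\partial_2,S^-\} \) and \( \{\partial_1,S^-\}\mid\{\partial_2,S^+\} \), both non-separating since \( S^+ \) and \( S^- \) are rejoined. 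I would also check that spheres parallel to boundary components of \( M_{0,4} \) are either \( S \) or inessential, and that distinct pairings give non-isotopic spheres in \( M_{1,2} \), for instance via their homology classes and which boundary components they separate.

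For (2), I would describe the stabilizer of \( S \) via its action on \( M_{0,4} \). A mapping class fixing \( S \) and each \( \partial_i \) is determined, modulo sphere twists (which act trivially on spheres), by whether it swaps \( S^+ \) and \( S^- \). If it does not swap them, it is in the image of \( \mcg(M_{0,4}) \) with all boundary components fixed, and I would argue that this acts trivially on all spheres of \( M_{1,2} \), possibly up to twisting around \( S \); a sphere twist about \( S \) fixes every sphere, although pushes of \( \partial_i \) around the \( \mathbb S^1 \)-direction need care, as addressed below. If it does swap them, it is the "flip" \( f \), namely the hyperelliptic-type involution reversing the \( \mathbb S^1 \) factor; this transposes the two non-separating pairings, giving the first claim. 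The "Moreover" follows by composing \( g \) with \( f \) if needed, which reduces to the non-swapping case. Note that \( g \) could also swap \( \partial_1,\partial_2 \); I would handle this by remarking that swapping the \( \partial_i \) together with swapping \( S^\pm \) realizes \( f \) as well, up to the trivial class.

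The main obstacle is the claim that \( f(S')\neq S' \) for \emph{every} non-separating \( S'\neq S \), not just those disjoint from \( S \), together with the triviality of the non-swapping stabilizer. Here I would pass to the graph picture: \( M_{1,2} \) corresponds to a rank-one graph, namely a loop with two rays or two marked points. Non-separating spheres correspond to edges crossed exactly once by the loop, i.e.\ to conjugacy data of the generator \( t \) relative to the two basepoints, parametrized by the ``offset'' \( n\in\mathbb Z \) between them. Under this parametrization \( f \) acts as \( n\mapsto c-n \) for a fixed \( c \), with \( S \) corresponding to a fixed point. Since the offsets of non-separating spheres then form a set on which \( f \) acts as a reflection with a single fixed value, no other non-separating sphere is fixed. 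I expect establishing this parametrization rigorously, for example via normal-form/Hatcher-type arguments or the cited \cite[Section~2.3]{UdallSphere}, to be the hardest step, and I would lean on that reference.
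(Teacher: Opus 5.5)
Your treatment of (1) and of the swap/no-swap dichotomy in (2) follows the paper. You cut along \(S\) to get \(M_{0,4}\), read off the three pairings of its boundary spheres, and sort the stabilizer of \(S\) according to whether it exchanges the sides \(S^\pm\).

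The genuine difference is in proving that \(S\) is the only non-separating sphere fixed by \(f\).
\begin{itemize}
\item The paper forms the graph whose vertices are non-separating spheres and whose edges record disjointness. From (1), every vertex has exactly two neighbours; the paper declares the graph a line and notes that \(f\) swaps the two neighbours of \(S\), so \(f\) is the reflection through \(S\).
\item You attach an integer offset to each non-separating sphere and compute that \(f\) acts by \(n\mapsto c-n\). Concretely: in the infinite cyclic cover \(\mathbb S^2\times\mathbb R\) minus the lifts of the two balls, a lift of the sphere is determined by which lifts of \(\partial_1\) and of \(\partial_2\) lie on its negative side. These form down-sets \(\{k\le a\}\) and \(\{k\le b\}\), and \(b-a\) is invariant under deck transformations.
\end{itemize}

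What each route buys:
\begin{itemize}
\item The paper's route is shorter and reuses (1). However, ``degree two'' only shows the graph is a disjoint union of lines and cycles. Connectedness and the absence of cycles are asserted, not proved, and these are exactly what rule out a second fixed vertex; a reflection of an even cycle through a vertex also fixes the antipodal vertex.
\item Your invariant, once shown to be complete and surjective, supplies this structure directly: distinct disjoint non-separating spheres are those whose offsets differ by one. The cost is a Laudenbach-type fact that spheres in the cover are determined by how they partition the boundary lifts. You defer this to Udall, much as the paper implicitly does.
\end{itemize}

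One correction to your side remark about \(g\) exchanging the boundary spheres. A map exchanging both \(\partial_1\leftrightarrow\partial_2\) and \(S^+\leftrightarrow S^-\) sends \(\{\partial_1,S^+\}\mid\{\partial_2,S^-\}\) to itself. It therefore fixes both non-separating neighbours of \(S\) and acts trivially on spheres. It is exchanging \(\partial_1,\partial_2\) \emph{alone}, preserving the sides of \(S\), that acts like \(f\). In offsets, reversing the \(\mathbb S^1\)-direction together with swapping the \(\partial_i\) is a translation, and fixing \(S\) forces that translation to be trivial. So the dichotomy survives, but not for the reason you gave.

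This case does not arise under the paper's stated convention that mapping classes of manifolds with boundary fix \(\partial M\) pointwise, and the paper's own proof tacitly assumes \(g\) preserves each boundary component. Similarly, your concern about pushes of \(\partial_i\) along the \(\mathbb S^1\)-direction is moot inside the stabilizer of \(S\). Such pushes translate offsets, so they fix no non-separating sphere.
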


	\begin{proof}
		Let us start with \( M_{0,4} \), and label its boundary components \( \partial_1, \partial_2, \partial_3 \), and \( \partial_4 \). 
		By \cite[Lemma~9]{BeringFinite}, \( M_{0,4} \) has exactly three spheres, each separating and each corresponding to a partition of \( \partial M_{0,4} \).
		Now, observe that if we cut \( M_{1,2} \) along \( S \), we obtain a copy of \( M_{0,4} \), implying there are three spheres disjoint from \( S \). 
		A sphere in \( M_{0,4} \) becomes non-separating in \( M_{1,2} \) exactly when it separates \( \partial_1 \) from \( \partial_2 \). 
		It follows that only one of the three spheres disjoint from \( S \) in \( M_{1,2} \) is separating. 

	  Continuing to view \( M_{0,4} \) as \( M_{1,2} \) cut along \( S \), let \( f \in \mcg(M_{0,4}) \) be the mapping class transposing \( \partial_1 \) and \( \partial_2 \) while fixing both \( \partial_3 \) and \( \partial_4 \). 
		Then \( f \) induces a map on \( M_{1,2} \) fixing \( S \) and transposing the two non-separating spheres disjoint from \( S \). 
		Abusing notation, let us view \( f \in \mcg(M_{1,2}) \).
		Now, if \( g \in \mcg(M_{1,2}) \) also fixes \( S \) and each boundary component of \( \partial M_{1,2} \), then by restricting \( g \), we can view \( g \in \mcg(M_{0,4}) \).
		In \( M_{0,4} \), \( g \) either permutes \( \partial_1 \) and \( \partial_2 \) or fixes them.
		If it fixes them, then it fixes every sphere in \( M_{0,4} \), implying it fixes every sphere in \( M_{1,2} \).
		If it permutes them, then \( g^{-1}\circ f \) fixes every sphere in \( M_{0,4} \), implying \( g(S') = f(S') \) for every sphere \( S' \) in  \( M_{1,2} \). 

		Let us finish by arguing that \( S \) is the unique non-separating sphere fixed by \( f \). 
		Consider the graph whose vertex set consists of the non-separating spheres in \( M_{1,2} \), and where the edge relation corresponds to admitting disjoint representatives. 
		By (1), we can see this graph is just the real line with vertices at every integer point. 
		It is clear \(\mcg( M_{1,2}) \) acts by automorphisms on this graph. 
		The map \( f \)  induces the reflection through the vertex corresponding to \( S \), implying \( S \) is the unique fixed non-separating sphere.  
	\end{proof}

	\begin{lemma} 
	\label{lem:intersection of centralizers}
		Let \( \Gamma \) be a locally finite connected graph. 
		Suppose \( S \) is a basic separating sphere in \( M_\Gamma \), and let \( M_1 \) and \( M_2 \) denote the closures of the components of \( M_\Gamma \ssm S \).
		If \( U_i \) consists of the elements of \( \autg \) supported in \( M_i \) and \( C_i = C(U_i) \) consists of the elements of \( \autg \) that commute with all the elements of \( U_i \), then \( C(U_1) = U_2 \). 
	\end{lemma}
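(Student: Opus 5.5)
We must prove two inclusions: \( U_2 \subseteq C(U_1) \) and \( C(U_1) \subseteq U_2 \).

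The inclusion \( U_2 \subseteq C(U_1) \) is the easy one. Let \( \psi_1 \in U_1 \) and \( \psi_2 \in U_2 \). Choose homeomorphisms \( f_1, f_2 \) representing them with \( \supp(f_i) \subseteq M_i \). First isotope each \( f_i \) so that it is the identity on a collar of \( S \), which is possible since each \( f_i \) fixes \( S \). Then the supports are disjoint, so \( f_1 f_2 = f_2 f_1 \) as homeomorphisms. Hence the induced automorphisms of \( \mathcal S(\Gamma) \) commute.

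For the reverse inclusion, let \( \varphi \in C(U_1) \). By \Cref{lem:support of centralizers} applied with \( M = M_1 \), there is a representative \( f \) of \( \varphi \) fixing \( S \) pointwise with \( f[M_1] = M_1 \), and hence also \( f[M_2] = M_2 \). Write \( f = f_1 \cup f_2 \), where \( f_i = f|_{M_i} \) fixes \( \partial M_i = S \) pointwise, so that \( f_i \) defines a class in \( \mcg(M_i) \). Let \( g_i \co M_\Gamma \to M_\Gamma \) be the extension of \( f_i \) by the identity. Then \( f = g_1 g_2 \) and \( g_2 \) is supported in \( M_2 \). It therefore suffices to show that \( g_1 \) induces the identity on \( \mathcal S(\Gamma) \).

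Since \( g_2 \in U_2 \subseteq C(U_1) \), the element \( g_1 = \varphi g_2^{-1} \) also commutes with every element of \( U_1 \). Equivalently, the induced action of the class \( [f_1] \) on the spheres of \( M_1 \) is central in the image of \( \mcg(M_1) \) in \( \autg \); here \Cref{lem:complements}(ii) identifies \( \mcg(M_1) \) with a subgroup of \( \mcg(M_\Gamma) \). The goal is to show that this central element acts trivially on every sphere of \( M_\Gamma \).

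We first check spheres in \( M_1 \). Suppose \( g_1(S') = S'' \neq S' \) for some sphere \( S' \subset M_1 \). Every \( \psi \in U_1 \) satisfies \( \psi g_1 = g_1 \psi \), so \( g_1 \) is equivariant for the action of \( \mcg(M_1) \) on spheres. I would rule out a nontrivial equivariant map in two steps.

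\begin{enumerate}
\item For a basic separating sphere \( S' \subset M_1 \), there is a sphere \( T \) disjoint from \( S' \) that \( g_1 \) must preserve by the argument of \Cref{lem:support of centralizers}. That argument says that commuting with everything supported on one side of \( S' \) forces \( S' \) to be fixed. Here \( g_1 \) commutes with everything supported in the side of \( S' \) lying in \( M_1 \); I would need to ensure that this side also yields infinite orbits, via \Cref{lem:infinite orbit} and basicness.
\item Conclude that \( g_1 \) fixes all basic separating spheres in \( M_1 \). Then use the proof of \Cref{lem:new subbasis} to get that it fixes all spheres in \( M_1 \).
\end{enumerate}

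Spheres contained in \( M_2 \) and \( S \) itself are fixed trivially, since \( g_1 \) is the identity there.

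Finally, take a sphere \( T \) meeting both sides of \( S \). Such a \( T \) is determined up to isotopy by the pieces it cuts on each side: the planar surfaces \( T \cap M_i \), capped off as in the proof of \Cref{lem:support of centralizers}, together with their gluing pattern along \( S \). Since \( g_1 \) is the identity near \( S \) and on \( M_2 \), and acts trivially on spheres in \( M_1 \), it acts on \( M_1 \) as a sphere-twist-type element. Hence \( g_1 \in K_\Gamma \), that is, \( g_1 \) acts trivially on \( \mathcal S(\Gamma) \). The delicate part of this step is that sphere twists about spheres in \( M_1 \) act trivially on all of \( \mathcal S(\Gamma) \); this is exactly the statement that \( K_\Gamma \) is generated by sphere twists.

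\textbf{Main obstacle.} The hardest step is handling the spheres inside \( M_1 \) whose complementary pieces in \( M_1 \) are small, such as the \( M_{1,2} \) pieces. I expect \Cref{lem: m12} was set up precisely for this. If \( g_1 \) fixes a non-separating sphere \( S' \) and the surrounding data, it can at most act as the transposition \( f \) of \Cref{lem: m12}(2). Commuting with elements of \( U_1 \) that move the neighbouring non-separating spheres along the line of non-separating spheres would then force that reflection to be trivial, because a reflection does not commute with a translation.
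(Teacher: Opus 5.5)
Your easy inclusion, the decomposition \( f = g_1 g_2 \), the observation that \( g_1 \) commutes with \( U_1 \), and your final passage to sphere twists are all fine. Your Step 1 is also fine: it is just \Cref{lem:support of centralizers} applied to \( S' \), with \( M \) the side of \( S' \) contained in \( M_1 \).

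The gap is Step 2, the passage from basic separating spheres to all spheres in \( M_1 \). Basicness of \( S \) only gives \( (g(M_1),e(M_1)) > (1,1) \). When \( M_1 \) is small, \( S \) is the only basic separating sphere in \( M_1 \). Take \( M_1 \cong M_{2,1} \), or \( (g(M_1),e(M_1)) = (1,2) \). The side of a separating \( S' \subset M_1 \) away from \( S \) has complexity \( > (1,1) \) only if it absorbs all the genus and ends of \( M_1 \). That forces the region between \( S \) and \( S' \) to be \( M_{0,2} \), so \( S' \) is isotopic to \( S \). In this case Step 1 yields nothing. You also cannot invoke \Cref{lem:new subbasis}: the auxiliary basic spheres \( S_1, S_2 \) it produces (after re-choosing \( \Gamma \) up to proper homotopy equivalence) would have to lie outside \( M_1 \) or cross \( S \), where you have no control over \( g_1 \). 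Your ``main obstacle'' remedy does not close this. It presupposes that \( g_1 \) already fixes a non-separating sphere and preserves an \( M_{1,2} \) around it, which is exactly what has to be proved.

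The missing idea is to make the reflection an element of \( U_1 \), rather than a possible form of the unknown \( g_1 \).
\begin{enumerate}
\item Given any non-separating \( \Sigma \subset M_1 \), choose \( N \cong M_{1,2} \) in \( M_1 \) containing \( \Sigma \) and having \( S \) as a boundary component.
\item Let \( \psi \in U_1 \) be the element from \Cref{lem: m12} supported in \( N \); it fixes \( \Sigma \) and swaps its two non-separating neighbours.
\item Since \( g_1 \) commutes with \( \psi \), it preserves \( N \), the support of \( \psi \) up to isotopy. It therefore preserves the set of non-separating spheres in \( N \) fixed by \( \psi \). By \Cref{lem: m12} that set is just \( \Sigma \), so \( g_1(\Sigma) = \Sigma \).
\end{enumerate}
This works for every non-separating sphere in \( M_1 \), whatever the complexity of \( M_1 \). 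Separating spheres in \( M_1 \) are then recovered in one of two ways. Either they are boundary spheres of the \( M_{0,4} \) regular neighbourhood of two disjoint non-separating spheres. Or, when they bound an \( M_{1,1} \), each is the unique separating sphere in an \( M_{1,2} \) disjoint from a fixed non-separating sphere. Once every sphere in \( M_1 \) is fixed, your sphere-twist conclusion via \Cref{lem:complements} goes through as in the paper.
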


	\begin{proof}
		First, observe that \( U_2 \subset C_1 \). 
		So, we need only concern ourselves with one containment; to this end, fix \( \vp \in C_1 \). 	
		By \Cref{lem:support of centralizers}, \( \varphi(S) = S \); moreover, we can choose a homeomorphism \( f \co M_\Gamma \to M_\Gamma \) that induces \( \vp \) on \( \mathcal S(\Gamma) \) and such that \( f \) restricts to the identity on \( S \) and \( f[M_1] = M_1 \).
		We can therefore write \( f = f_1 \circ f_2 \) with \( f_i \) supported in \( M_i \).
		Let \( \varphi_i \) denote the isotopy class of \( f_i \), so that \( \varphi_i \in U_i \) and \( \varphi = \varphi_1\varphi_2 \). 
		We claim \( \varphi_1 \) is the identity. 

		Fix a non-separating sphere \( \Sigma \) in \( M_1 \); note that such a sphere exists as \( S \) is basic.  
		Again using that \( S \) is basic, we can choose a copy of \( M_{1,2} \), denoted \( N \), in \( M_1 \) containing \( \Sigma \) and having \( S \) as a boundary component. 

		By \Cref{lem:support of centralizers}, every element of \( U_1 \) fixes \( S \).
		Therefore, by \Cref{lem: m12}, there is a unique nontrivial element of \( U_1 \) supported in \( N \) fixing \( \Sigma \), call it \( \psi \). 
		(In \( N \), there are exactly two non-separating spheres disjoint from \( \Sigma \), and \( \psi \) swaps them.)
		It follows that \( f_1[N] \) is isotopic to \( N \), as \( \vp_1 \) commutes with \( \psi \) and \( N \) is the support of \( \psi \) (up to isotopy). 
		Moreover, as \( \vp_1(S) = S \), \( \vp_1 \) fixes each of the isotopy classes of the boundary components of \( N \), allowing us to write \( \vp_1 = \eta_1\eta_2 \) with \( \eta_1 \) supported in \( N \) and \( \eta_2 \) supported in the closure of \( M_1 \ssm N \). 

		Now, \( \Sigma \) is the unique non-separating sphere in \( N \) fixed by \( \psi \).
		Hence, \( \eta_1 \) commuting with \( \psi \) implies that \( \eta_1(\Sigma) = \Sigma \).
		In particular, \[ \vp_1(\Sigma) = \eta_1(\Sigma) = \Sigma. \]
		Therefore, as \( \Sigma \) was arbitrary, \( \vp_1 \) fixes every non-separating sphere in \( M_1 \). 

		We now must show that \( \vp_1 \) fixes every separating sphere.
		Let \( R \) be a separating sphere in \( M_1 \), and let \( \Sigma_0 \) be a non-separating sphere in \( M_1 \) disjoint from \( R \). 
		First assume that \( R \) does not bound a copy of \( M_{1,1} \). 
		Then we can choose a copy of \( M_{1,2} \), call it \( N \), containing \( \Sigma_0 \) and having \( R \) as a boundary component. 
		Let \( R' \) denote the other boundary component of \( N \). 
		In \( N \), there are two non-separating spheres \( \Sigma_1 \) and \( \Sigma_2 \) disjoint from \( \Sigma_0 \). 
		A regular neighborhood of \( \Sigma_1 \cup \Sigma_2 \) is a copy of \( M_{0,4} \) with a boundary component isotopic to \( R \), a boundary component isotopic to \( R' \), and the remaining boundary components each isotopic to \( \Sigma_0 \). 
		In particular, as \( \vp_1 \) fixes each of the \( \Sigma_i \), we can conclude that \( \vp_1 \) fixes \( R \) as well.

		Now, assume that \( R \) bounds a copy of \( M_{1,1} \).
		We can then find a copy of \( M_{1,2} \), call it \( N \), containing both \( R \) and \( \Sigma_0 \).
		The above argument shows that \( \vp_1 \) fixes \( \Sigma_0 \) and each of the boundary components of \( N \). 
		It follows that we can write \( \vp_1 = \eta_1\eta_2 \) with \( \eta_1 \) supported in \( N \) and \( \eta_2 \) supported in the closure of \( M_1 \ssm N \). 
		Now, \( R \) is the unique separating sphere in \( N \) disjoint from \( \Sigma_0 \).
		Therefore, as \( \eta_1 \) fixes \( \Sigma_0 \), it must also fix \( R \), and hence \( \vp_1(R) = \eta_1(R) = R \). 

		We have shown that \( \vp_1 \) fixes each sphere contained in \( M_1 \).
		Therefore, as an element of \( \mcg(M_1) \) it is either trivial or a product of sphere twists.
		By \Cref{lem:complements}, the same is true as an element of \( \mcg(M_\Gamma) \).
		In either case, it is trivial as an element of \( \autg \).
		Therefore, \( \vp = \vp_2 \in U_2 \), implying \( U_2=C_1 \). 
 	\end{proof}

	\begin{theorem} 
	\label{thm:graphs}
		If \( \Gamma \) is a locally finite connected  graph of rank at least two, then \( \mcg(\Gamma) \) has a unique Polish group structure.
	\end{theorem}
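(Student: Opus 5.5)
Finite-type graphs of rank at least two have countable mapping class group, so we may assume \( \Gamma \) is of infinite type. Then \Cref{thm:automorphism} identifies \( \mcg(\Gamma) \) with \( \autg \) as abstract groups, and it suffices to show that \( \autg \) has a unique Polish group structure. The candidate is the permutation topology. By definition, the stabilizers \( U_S \) of vertices form a neighborhood subbasis of the identity. By \Cref{lem:new subbasis}, every stabilizer of a non-separating or non-basic separating sphere contains \( U_{S_1}\cap U_{S_2} \) for some basic separating spheres \( S_1,S_2 \). Hence \( \{U_S : S \text{ basic separating}\} \) is already a neighborhood subbasis of the identity. By \Cref{lem:subbasis}, it is enough to show that \( U_S \) is algebraic for every basic separating sphere \( S \).

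Fix such an \( S \), let \( M_1, M_2 \) be the closures of the complementary components, and let \( U_1, U_2 \) be the subgroups of elements supported in \( M_1, M_2 \), as in \Cref{lem:intersection of centralizers}. That lemma gives \( U_2 = C(U_1) \), and by symmetry \( U_1 = C(U_2) \). Each of these is an intersection of centralizers of single elements, so by \Cref{lem:centralizers} each is algebraically closed; in particular \( U_1 \) and \( U_2 \) are algebraic subgroups. Their intersection is trivial. Indeed, an element \( \vp \) in both commutes with all of \( U_1 \), being in \( U_2 \), and also lies in \( U_1 \), so it is central in \( U_1 \). By the argument of \Cref{lem:intersection of centralizers} applied inside \( M_1 \) (taking \( \vp=\vp_1 \), \( \vp_2=1 \)), such an element fixes every sphere in \( M_1 \). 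Since it is also supported in \( M_1 \), it fixes every sphere in \( M_2 \) and every sphere crossing \( S \). It therefore acts trivially on \( \mathcal S(\Gamma) \). By \Cref{cor:product is algebraic}, \( U_1U_2 \) is algebraic.

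Next, relate \( U_1U_2 \) to \( U_S \). Every element of \( U_1U_2 \) fixes \( S \). Conversely, an element fixing \( S \) is represented by a homeomorphism preserving \( S \), and either preserves or swaps the two sides. The side-swapping case can happen only when \( \Gamma_1 \) and \( \Gamma_2 \) are proper homotopy equivalent. Side-preserving elements can be isotoped to fix \( S \) pointwise, possibly after composing with a reflection of \( S \). Using the two components of \( \Homeo(\mathbb S^2) \), the side-preserving stabilizer is then \( U_1U_2 \), possibly together with one more coset coming from an orientation-reversing gluing. In all cases \( U_S \) is a union of at most finitely many (at most four) left translates of \( U_1U_2 \) by fixed elements of \( \autg \). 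Translates of algebraic sets are algebraic, since left multiplication is a homeomorphism in any group topology. So \( U_S \) is algebraic, and \Cref{lem:subbasis} finishes the proof.

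The main obstacle is this last step: showing that \( U_S \) really is a finite union of cosets of \( U_1U_2 \), and that \( U_1\cap U_2 \) is trivial. This needs care with the local structure near \( S \): orientation on \( S \), and whether a homeomorphism fixing the isotopy class of \( S \) can be made to fix \( S \) pointwise with each side invariant. Sphere twists about \( S \) are harmless because they act trivially on \( \mathcal S(\Gamma) \). I would handle this with the Alexander trick, as in the proof of \Cref{lem:change of coordinates}, and by passing through \( \mcg(M_\Gamma)\to\autg \).
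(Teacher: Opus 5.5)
Your proposal is correct and follows essentially the same route as the paper. You reduce to basic separating spheres via \Cref{lem:new subbasis}, realize \( U_1 \) and \( U_2 \) as intersections of centralizers via \Cref{lem:intersection of centralizers}, apply \Cref{cor:product is algebraic}, and write \( U_S \) as a finite union of translates of \( U_1U_2 \).

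Your explicit argument that \( U_1\cap U_2=\{1\} \) (which the paper just asserts "by definition") and your allowance for an extra coset from orientation on \( S \) are harmless refinements. One caution: the claim that such an element fixes every sphere crossing \( S \) does not follow from the support condition alone. It comes from the sphere-twist step at the end of the proof of \Cref{lem:intersection of centralizers}, via \Cref{lem:complements}.
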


	\begin{proof}
		Every countable group has a unique Polish group structure, namely the one given by the discrete topology.
		We may therefore assume that \( \Gamma \) is of infinite type.

		By \Cref{thm:automorphism}, it suffices to prove that the permutation topology on \( \autg \) is the unique Polish group structure on \( \autg \). 
		By \Cref{lem:subbasis} and the definition of the permutation topology, it suffices to show that stabilizers of vertices are algebraic in \( \autg \). 
		By \Cref{lem:new subbasis}, it suffices to show that \( U_S \) is algebraic, where \( S \) is a basic separating sphere.

		Let \( S \) be a basic separating sphere.
		Let \( M_1 \) and \( M_2 \) denote the closures of the components of \( M_\Gamma \ssm S \). 
		Let \( U_i \) denote the elements of \( \autg \) that can be represented by a homeomorphism of \( M_\Gamma \) supported in \( M_i \). 
		For \( g \in \autg \), let \( C(g) = \{ h \in \autg: hg = gh \} \) denote the centralizer of \( g \). 
		By \Cref{lem:intersection of centralizers}, 
		\[ U_i = \bigcap_{g \in U_{3-i}} C(g). \]
		By \Cref{lem:centralizers}, \( C(g) \) is algebraically closed, and hence \( U_i \) is algebraically closed, as an arbitrary intersection of algebraically closed sets is algebraically closed. 
		By definition, \( U_1 \cap U_2 = \{1\} \), implying \( U_1U_2 \) is algebraic by \Cref{cor:product is algebraic}.
		
		Now, \( U_1U_2 \) is precisely the subgroup of \( \autg \) consisting of elements admitting a representative homeomorphism of \( M_\Gamma \) that fix each of \( M_1 \) and \( M_2 \) setwise.
		If there is no homeomorphism in \( U_S \) swapping \( M_1 \) and \( M_2 \), then every element of \( U_S \) can be written as \( g_1g_2 \) with \( g_i \in U_i \), implying \( U_S = U_1U_2 \) is algebraic.
		If there is a homeomorphism \( \sigma \) of \( M \) swapping \( M_1 \) and \( M_2 \), then \( U_S = U_1U_2 \cup \sigma U_1U_2 \). 
		As \( U_1U_2 \) is algebraic, so is \( \sigma U_1U_2 \), implying their union---and hence \( U_S \)---is algebraic. 
	\end{proof}

	\subsection{Pure mapping class group}

	Unlike in the surface case, it not as simple to give a condition that guarantees a closed subgroup of \( \mcg(\Gamma) \) has unique Polish topology.
	However, we describe here a set of conditions to be able to directly use the arguments above, which we can apply to  pure mapping class groups.

	\begin{proposition}\label{prop: graph subgroup}
		Let \( \Gamma \) be a locally finite connected graph of rank at least two, and let \( G \) be a closed subgroup of \( \mcg(\Gamma) \). 
		Suppose that 
		\begin{enumerate}[(i)]
			\item \( G \) contains all word maps, and
			\item given a submanifold \( M \) of \( M_\Gamma \) homeomorphic to \( M_{1,2} \) with both boundary components being separating spheres and given two non-separating spheres \( S_1 \) and \( S_2 \) in \( M \), there exists an element of \( G \) with a representative in \( \mcg(M_\Gamma) \)  supported in \( M \) and mapping \( S_1 \) to \( S_2 \). 
		\end{enumerate}
		Then \( G \) has a unique Polish group structure.
		\qed
	\end{proposition}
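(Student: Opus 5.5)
We rerun the proof of \Cref{thm:graphs} inside \( G \), viewed via \Cref{thm:automorphism} as a closed subgroup of \( \autg \) with the subspace permutation topology. If \( \Gamma \) has finite type, \( G \) is countable and there is nothing to prove, so assume infinite type. By \Cref{lem:new subbasis} the sets \( G \cap U_S \), with \( S \) ranging over basic separating spheres, form a neighborhood subbasis of the identity in \( G \). By \Cref{lem:subbasis} it therefore suffices to show that each \( G \cap U_S \) is algebraic in \( G \), that is, Borel in every Polish group topology on \( G \).

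Fix a basic separating sphere \( S \) with complementary closures \( M_1, M_2 \), and set \( V_i = G \cap U_i \), the elements of \( G \) supported in \( M_i \). The key claim is the relative version of \Cref{lem:intersection of centralizers}: the centralizer of \( V_1 \) in \( G \) equals \( V_2 \). Granting this, \( V_i = \bigcap_{g \in V_{3-i}} C_G(g) \) is algebraically closed in \( G \) by \Cref{lem:centralizers}. Since \( V_1 \cap V_2 = \{1\} \), \Cref{cor:product is algebraic} shows \( V_1V_2 \) is algebraic. Then \( G\cap U_S \) is either \( V_1V_2 \) or \( V_1V_2 \cup \sigma V_1V_2 \) for some \( \sigma \in G \) swapping \( M_1 \) and \( M_2 \), exactly as in the proof of \Cref{thm:graphs}.

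To prove the claim, I will check where \( U_1 \) was used in \Cref{lem:support of centralizers} and \Cref{lem:intersection of centralizers}, and supply those elements from \( G \) instead.

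\textbf{Infinite orbits.} \Cref{lem:support of centralizers} needs that every sphere in \( M_1 \) (or in the finite canonical set \( P \)) has infinite orbit under the elements supported in \( M_1 \). \Cref{lem:infinite orbit} is proved with word maps (sphere pushes), and those supported in \( M_1 \) lie in \( G \) by hypothesis (i). So \( V_1 \) still moves every sphere of \( M_1 \) infinitely often, and any \( \varphi \) centralizing \( V_1 \) fixes \( S \) and preserves \( M_1 \). Writing \( \varphi = \varphi_1\varphi_2 \) as before, \( \varphi_1 \) and \( \varphi_2 \) need not lie in \( G \), but the rest of the argument only uses that \( \varphi_1 \) commutes with elements of \( V_1 \). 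This holds because \( \varphi_2 \) commutes with everything supported in \( M_1 \).

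\textbf{The involutions \( \psi \).} \Cref{lem:intersection of centralizers} needs, for each non-separating \( \Sigma \subset M_1 \) and copy \( N \cong M_{1,2} \subset M_1 \) containing \( \Sigma \), the involution \( \psi \) supported in \( N \) fixing \( \Sigma \) from \Cref{lem: m12}. This is where hypothesis (ii) enters, and it is the main obstacle. Since \( S \) is basic, I will choose \( N \) with separating boundary spheres. Hypothesis (ii) gives an element of \( G \) supported in \( N \) carrying one non-separating sphere disjoint from \( \Sigma \) to the other. I also need an element fixing \( \Sigma \); I will try composing with the elements given by (ii), and with word maps, to produce a mapping class fixing \( \Sigma \) and \( \partial N \) and swapping its two neighbors. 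By \Cref{lem: m12}(2) this element acts on spheres exactly as \( \psi \) does, so it equals \( \psi \) in \( \autg \). Concretely, in the line graph of non-separating spheres of \( N \), it suffices to realize a reflection, possibly as an element mapping \( \Sigma' \) to \( \Sigma'' \) with \( \Sigma \) as their midpoint. If (ii) only yields translations, I would apply (ii) to a pair of spheres whose midpoint is \( \Sigma \) and argue that an element supported in \( N \) mapping one to the other must be either the reflection or a translation, using the classification in \Cref{lem: m12}.

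With \( \psi \in G \), the remaining steps go through verbatim: \( \varphi_1 \) fixes all non-separating spheres of \( M_1 \), then all separating ones. Hence \( \varphi_1 \) is trivial in \( \autg \), which gives the claim and finishes the proof.
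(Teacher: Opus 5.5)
Your overall route is the paper's own: rerun \Cref{thm:graphs} inside \(G\), intersecting everything with \(G\). The paper only asserts that the arguments transfer line by line. Your handling of the infinite-orbit step via word maps, and of the fact that \(\varphi_1\) need not lie in \(G\), is fine. The genuine gap is exactly where you flagged the main obstacle: nothing in (i) or (ii) puts the involution \(\psi\) into \(G\), and your fallback cannot decide between the two alternatives it names. Modulo sphere twists, the mapping classes supported in \(N\cong M_{1,2}\) act on the line of non-separating spheres of \(N\) as the infinite dihedral group. Pushing a boundary sphere of \(N\) once around the core is a translation by one step, and \(\psi\) is the reflection through \(\Sigma\). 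Hypothesis (ii) only says that \(G\) acts transitively on this line, which translations already achieve, and composites of translations are translations. So an element of \(G\) supported in \(N\) carrying one neighbour of \(\Sigma\) to the other may simply be the translation by two.

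In fact \(\psi\) need not lie in \(G\) at all. Suppose \(\Gamma\) has finite rank \(n\ge 2\) and infinitely many ends. The determinant of the action on \(H_1(\Gamma;\mathbb{Z})\cong\mathbb{Z}^n\) is a homomorphism \(\mcg(\Gamma)\to\{\pm1\}\). It is trivial on the pointwise stabilizer of a finite sphere system dual to a subdivided core, so its kernel \(G\) is open, hence closed. Word maps act unipotently on \(H_1\), since they add multiples of \([w]\) with \(w\) disjoint from the interval, and pushes act trivially. Hence \(G\) satisfies (i) and (ii). But \(\psi\) negates the class of the core loop of \(N\) and fixes a complement, so \(\det\psi=-1\). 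By \Cref{lem: m12}, no element of this \(G\) supported in \(N\) fixes \(\Sigma\) while acting nontrivially on \(\mathcal S(\Gamma)\).

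So the step \(\eta_1(\Sigma)=\Sigma\) in \Cref{lem:intersection of centralizers} needs a different mechanism. One option is to strengthen (ii) so that it supplies the reflection. Another is to try to run the rigidity argument with translations only. There, an element supported in \(N\) that commutes with a translation acts on the line of \(N\) as a translation. You would then have to play several copies of \(M_{1,2}\) through \(\Sigma\) against each other to force the translation length to be zero.

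There is a second, smaller gap in your last step. \(G\cap U_S\) need not equal \(V_1V_2\) (or \(V_1V_2\cup\sigma V_1V_2\)), because an element \(u_1u_2\in G\) with \(u_i\in U_i\) need not have \(u_1,u_2\in G\). In the determinant example, take \(S\) basic with a loop on each side. Flipping both loops gives an element of \(G\cap U_S\) outside \(V_1V_2\); the decomposition in \(U_1U_2\) is unique since \(U_1\cap U_2=\{1\}\), and neither factor lies in \(G\). You therefore also need one of two things. Either show that \(V_1V_2\) has countable index in \(G\cap U_S\), so that \(G\cap U_S\) is a countable union of translates of an algebraic set. Or show that \(V_1V_2\) is open in \(G\), in which case these sets can themselves serve as the subbasis in \Cref{lem:subbasis}.
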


	\Cref{prop: graph subgroup} is verified simply by following the arguments above line-by-line and simply replacing any open set with its intersection with \( G \). 

	The \emph{pure mapping class group} of a locally finite connected graph \( \Gamma \), denoted \( \pmcg(\Gamma) \) is the subgroup of \( \mcg(\Gamma) \) consisting of mapping classes that fix each end of \( \Gamma \).  
	We similarly define the the pure mapping class group of a manifold \( M \), denoting it by \( \pmcg(M) \). 
	In the epimorphism \( \mcg(M_\Gamma) \to \mcg(\Gamma) \), the image of \( \pmcg(M_\Gamma) \) is \( \pmcg(\Gamma) \). 
	Note that (1) pure mapping class groups are closed, and (2) the pure mapping class group of a rank zero graph is trivial.

	\begin{corollary}\label{cor: pmcg graph}
		If \( \Gamma \) is a locally finite connected graph of rank at least two, then \( \pmcg(\Gamma) \) has a unique Polish group structure.
		\qed
	\end{corollary}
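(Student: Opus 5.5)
The plan is to deduce \Cref{cor: pmcg graph} from \Cref{prop: graph subgroup} by checking its two hypotheses for \( G = \pmcg(\Gamma) \). Since pure mapping class groups are closed, \( \pmcg(\Gamma) \) is a closed subgroup of \( \mcg(\Gamma) \). When \( \Gamma \) has finite type, \( \mcg(\Gamma) \) is countable, so \( \pmcg(\Gamma) \) is countable and the discrete topology is its unique Polish group structure. We may therefore assume \( \Gamma \) is of infinite type, which is the setting of \Cref{thm:automorphism} and of the subsequent lemmas.

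For hypothesis (i), I would recall that a word map (equivalently, a sphere push) is supported in a compact region. It is obtained by pushing a sphere, or an end of a compact piece, around a loop and returning it to its starting position. Its representative homeomorphism of \( M_\Gamma \) is therefore the identity outside a compact set, so it fixes every end of \( M_\Gamma \). Under the epimorphism \( \mcg(M_\Gamma) \to \mcg(\Gamma) \), this mapping class lands in the image of \( \pmcg(M_\Gamma) \), which is \( \pmcg(\Gamma) \). Hence \( \pmcg(\Gamma) \) contains all word maps.

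For hypothesis (ii), let \( M \cong M_{1,2} \) have separating boundary spheres, and let \( S_1, S_2 \) be non-separating spheres in \( M \). Applying \Cref{lem:change of coordinates}(i) to \( M = M_{\Gamma',2} \), where \( \Gamma' \) is a rank-one graph with one vertex and a loop, gives a homeomorphism \( h \) of \( M \) with \( h(S_1) = S_2 \). The essential point is to arrange that \( h \) fixes each boundary component of \( M \) pointwise, up to isotopy. The two boundary components cannot be swapped if we want an element of the pure group, because a swap can exchange the ends lying on the two sides of \( M \). So I would use the "moreover" clause of \Cref{lem:change of coordinates}(i): capping the boundary spheres by punctured balls turns them into ends, and \( h \) can then be chosen to fix those ends, hence to preserve each boundary sphere. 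Next I would compose with a homeomorphism supported in a collar of each boundary sphere to make \( h \) the identity there. This is possible because an orientation-preserving homeomorphism of \( \mathbb S^2 \) is isotopic to the identity. For orientation-reversing restrictions, I would precompose with a reflection-type symmetry of \( M_{1,2} \) preserving \( S_2 \); alternatively, one can simply check that the reflection is not needed, since \( h \) may be taken orientation-preserving. Extending \( h \) by the identity outside \( M \) then yields a homeomorphism of \( M_\Gamma \) supported in \( M \). It fixes every end of \( M_\Gamma \), since all ends lie outside the compact set \( M \), and it maps \( S_1 \) to \( S_2 \).

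With both hypotheses verified, \Cref{prop: graph subgroup} gives the conclusion. I expect the main obstacle to be the boundary-control step in (ii): showing that the change-of-coordinates homeomorphism can be chosen to fix both boundary spheres and to be orientation-compatible on them. If the lemma's "moreover" clause does not directly give this, I would instead construct the map explicitly from the structure of the non-separating sphere graph of \( M_{1,2} \), which by the proof of \Cref{lem: m12} is a line. There, sphere pushes of a boundary sphere around the genus act as translations, and these translations are supported in \( M \) and fix \( \partial M \).
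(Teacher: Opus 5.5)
Your proposal is correct and follows the paper's route. The paper obtains the corollary by applying \Cref{prop: graph subgroup} to the closed subgroup \( \pmcg(\Gamma) \); both hypotheses hold because word maps, and homeomorphisms of \( M_\Gamma \) supported in a compact copy of \( M_{1,2} \), are the identity off a compact set and hence fix every end. Your check of (ii) fills in details the paper leaves implicit, with one small slip: the orientation-correcting reflection preserving \( S_2 \) should be post-composed with \( h \), since pre-composing would require it to preserve \( S_1 \).
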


\bibliographystyle{amsplain}
\bibliography{references}

\end{document}